\newtheorem{theorem}{Theorem}
\newtheorem{corollary}{Corollary}
\newtheorem{lemma}{Lemma}
\newtheorem{definition}{Definition}
\newtheorem{proposition}{Proposition}
\newtheorem{remark}{Remark}
\begin{document}
\title{\bf{A state-dependent vector control for a West Nile Virus model from mosquitoes to birds$^\dag$}
\footnotetext{$^\dag$This research has been partially supported by the National Natural Science Foundation of China (Grant No. 11461067 and 11271312).}
\footnotetext{$^\ddag$Corresponding author, Tel/Fax: +86-991-858-5505. E-mail: lfnie@163.com.}}

\author{Lin-Fei Nie$^{\ddag}$\quad Jing-Yun Shen\\
{\footnotesize \textit{College of Mathematics and
Systems Science, Xinjiang University, Urumqi {\small 830046}, P.R. China}}\\
{\footnotesize }}

\date{}
\maketitle

\begin{abstract}
In this paper, a novel West Nile Virus model looking upon the infected birds as monitoring threshold, for the mosquitoes and birds with impulsive state feedback control is considered. We obtain sufficient conditions of the global asymptotical stability of the system without impulsive state feedback control via comprehensively qualitative analysis. By using the Poincar\'e map, we obtain that the system with impulsive state feedback control has a positive periodic solution of order-1 or order-2 which is asymptotical stability due to the analogue of Poincar\'e criterion, theory of differential inequalities, differential equation geometry and so on. What's more, sufficient conditions for existence and stability of the order one periodic solution are given by the existence and uniqueness of the limit. Our results show that the control measure is effective and feasible by means of numerical simulations.

\vspace{2mm}
\noindent
{\bf Key words:} West Nile Virus; state feedback control; positive periodic solution; orbital stability
\end{abstract}

\section{Introduction}
West Nile Virus(WNV) is a mosquito-borne that belongs to the encephalitis virus group within the family Flaviviridae \cite{Gabriel}. It first appeared in the United States in 1999, reached  Pennsylvania (PA) in 2000, and has since spread across PA as well as the rest of the United States. WNV has developed of a national donor testing program so far. It infects various mammal species specially horses and humans; however, the most commonly WNV infected animals are birds that serves as the reservoir host, the main path of WNV transmission is by the bite of an infected mosquitoes whose the salivary fluid \cite{Meshkat} and \cite{Garcia}. The natural pathway of the virus is from bird to mosquito to bird; therefore WNV is primarily spread by migrating birds, dispersal of resident birds, mosquito movement, or human transportation of mosquitos, birds and animals \cite{Henninga,Kilpatrick}. Millions of birds have died from WNV and for some species and locales, more than 50 percent of the population has perished. At least seven species of birds, all of whom are residents in PA, have suffered serious
declines in population because of WNV \cite{Henninga}.

Much has been done in terms of modeling and analysis of transmission dynamics of WNV, see, for example, Bowman et al.\cite{Bowman} has proposed a mathematical modelling and analysis to assess two main anti-WNV preventive strategies, namely: mosquito reduction strategies and personal protection, similarly, \cite{Wan, Wonham}. Fan et al. have considered the effect of temperature on the transmission of the virus spread in \cite {Fan}. Taking effective measures to curb the rage of WNV is self-evident. The more ordinary strategies incorporate the way of culling mosquitoes seeing in \cite{Xu}, which was considered the strategy of implementing periodic culling of mosquitoes at critical times, namely, the control of WNV with fixed moments, such that the death rate of mosquito was enhanced. Popular tactics were adopted to control the reproduction rate of mosquito by the sterile insect technique  which has also been applied to reduce or eradicate the wild mosquitoes. For example, utilizing radical or other chemical or physical methods, male mosquitoes are genetically modified to be sterile so that they are incapable of producing offspring despite being sexually active in \cite{Cai}. Zhang et al. \cite{Zhang} put forward periodic patterns and pareto efficiency of state dependent impulsive controls regulating interactions between wild and transgenic mosquito populations looking on the total size of the wild and transgenic mosquito populations as threshold value. Certain sufficient conditions for the existence and orbital stability of positive order-1 periodic solution of the model with state-dependent impulsive perturbations were attained.

 A question arises about how much the density of infected birds should be cured in combination with pesticide release in order to effectively control the spread of WNV. To address this question, which is the aim of this paper, we formulate a model for the interaction between mosquitoes and birds based on a three-dimensional ODE system with state-dependent impulsive perturbations. The state-dependent impulsive model is proposed for intergrated pest management such that the pest of the population size is no larger than its economic threshold(ET) in \cite{Tangsanyi-1,Tangsanyi-2} and the existence and stability of the order-1 periodic solution are given. Nie et al. \cite{Nie-1,Nie-2} employed the strategies of state dependent pulse vaccination in epidemic model which is more suitable to actual circumstances. What's more, a microbial pesticide model with impulsive state feedback control was proposed in \cite{Wang-2} and so on.

Motivated by the above-mentioned considerations and by the ideas of \cite{Zhang, Xu}. We formulate a model for the interaction between mosquitoes and birds based on a three-dimensional ODE system which is subject to state-dependent impulsive peturations. This paper is structed as follows. In the next section, we formulate mathematical model\eqref{System-1} considering state-dependent pesticide sprays and curing the infected birds as control measures and take into account the generalized planar impulsive system. Section 3, the qualitative analysis of the system without impulsive effect is given. Section 4, we investigate the existence and stability of the order-$k(1,2)$ periodic solution in different cases. In order to asses the control strategy, sensitivity analysis is applied to study the contribution of each parameter on the disease transmission in Section 5. Finally, we give brief discussions on our findings.

\section{Model Formulation and Preliminaries}

A schematic description of the traditional WNV model from vector mosquitoes to birds can be depicted in Figure 1(a). Here, the total female mosquito population is split into the populations of susceptible ($S_m$) and infected ($I_m$) mosquitoes, and the total bird population ($N_b$) is divide into the susceptible ($S_b$) and infected ($I_b$) birds. However, in a real world application, it is very difficult to distinguish from susceptible and infected mosquitoes. Additionally, George et al. \cite{George} estimated the impact of WNV on the survival of avian populations and find that populations were negatively affected by WNV in 23 of the 49 species studied (47\%). Based on these, it is reasonable to simplify the transmission rules of WNV from the flow chart Figure 1(a) to Figure 1(b) for an analysis
of mathematical reasoning. In it, we classify female mosquitoes and birds into three subclass: female mosquitoes ($M$), susceptible birds ($S_b$) and infected birds ($I_b$).

\begin{figure}[htb]
\centering
\includegraphics[width=0.90\textwidth,height=0.33\textwidth]{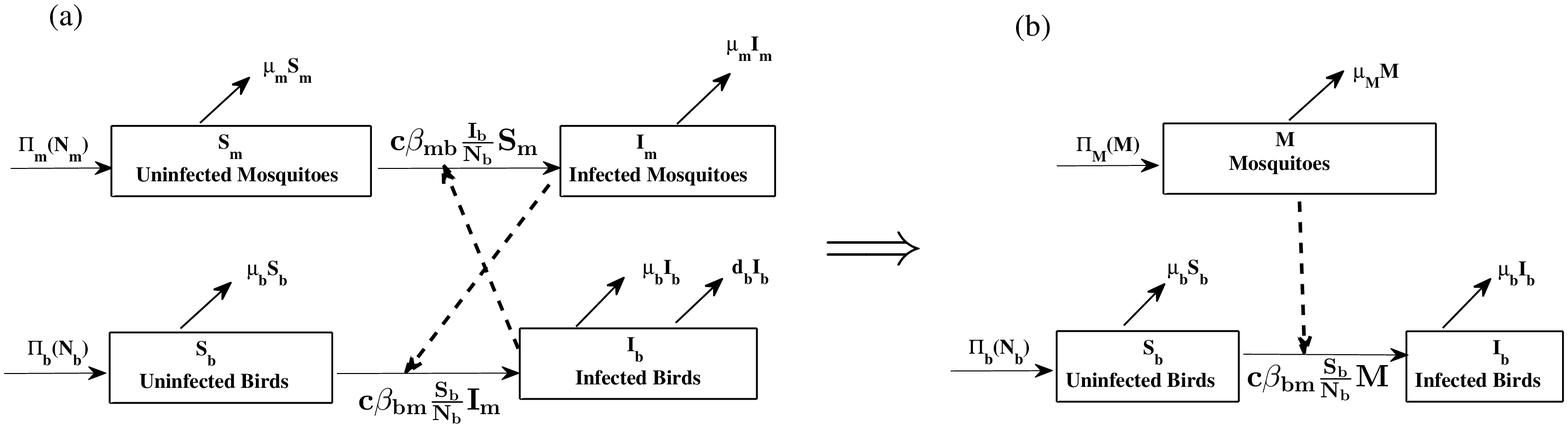}
\caption{Flow chart of the transmission of West Nile virus between vector mosquitoes and birds.}
\label{Fig.1}
\end{figure}

Further, in order to control the transmission of WNV, it's essential to reduce the quantities of infected birds and mosquitoes. Due to the consideration of environmental protection and  economic support, the effective and easily implemented control strategies are only applied when the quantity of the infected birds reaches a hazardous threshold value but not at any state. To achieve this objective, a host vector dynamics model with state dependent impulsive control strategies for the transmission of WNV reads
\begin{equation}\label{System-1}
\left\{\begin{aligned}&\left.
\begin{aligned}
& \frac{\mathrm{d}M(t)}{\mathrm{d}t}=\mu_m M(t)\left(1-\frac{M(t)}{K_m}\right)-\delta_m M(t)\\
&\frac{\mathrm{d}S_b(t)}{\mathrm{d}t}=\mu_{b}(S_b(t)+I_b(t))-c\beta_{bm}\frac{ S_b(t)}{N_b(t)}M(t)-\mu_{b}S_{b}(t)\\
&\frac{\mathrm{d}I_b(t)}{\mathrm{d}t}=c\beta_{bm}\frac{ S_b(t)}{N_b(t)}M(t)-\mu_{b}I_{b}(t)
\end{aligned}\right\}
 I_b(t)< H_b,\\
&\left.
\begin{aligned}
&M(t^+)=(1-p)M(t)\\
&S_b(t^+)=S_b(t)+qI_b(t)\\
&I_b(t^+)=(1-q)I_b(t)
\end{aligned}
\right\}
I_b(t)= H_b.
\end{aligned}
\right.
\end{equation}
The means of model \eqref{System-1} as follows: when the quantity of infected bird $I_b$ reaches the critical threshold value $H_b$ at time $t_i(H_b)$ at the $i$-th time, control measures (such as soluble powder drug blend in daily birds' drinking water or breeders can feed grain which has been soaked in liquid medicine to birds, spraying pesticides to reduce the quantity of mosquitoes) are taken, then the quantities of mosquitoes, susceptible and infected birds immediately become $(1-p)M(t_i(H_b))$, $(1-q)I_b(t_i(H_b))$ and$ S_b(t_i(H_b))+qI_b(t_i(H_b))$, respectively. The parameters and variables of the model are described in Table \ref{Table1}.

\begin{table}[htpb]
\centering
\caption{List of parameters for WNV transmission}
\vspace{3mm}
\begin{tabular}{llcc}
\toprule
Parameter & Definition & Range & Source \\
\midrule
$\mu_m$ & Mosquitoes per capita birth rate ($day^{-1}$) & $0.036\sim42.5$& \cite{Wonham}\\[1mm]
$K_m$ & Environmental carrying capacity of mosquitoes & $10^5\sim10^6$ & \cite{Wonham} \\[1mm]
$\delta_m$ & Natural death rate of mosquitoes ($day^{-1}$) & $0.016\sim0.07$ & \cite{Wonham}\\[1mm]
$\mu_b$ & Recruitment/Death rate of birds ($day^{-1}$) & $10^{-4}\sim10^{-3}$& \cite{Wonham}\\[1mm]

$c$  & Biting rate of mosquitoes & $0.09\sim0.16$ &  \cite{Wonham}\\[1mm]

$\beta_{bm}$ & Transmission probability from mosquitoes to birds & $0.80\sim0.96$ & \cite{Bowman,Wonham}\\[1mm]

$N_b$ & The total bird population & $-$ & Assumed \\[1mm]
$p$ & Culling rate of mosquitoes & $0\sim1$ & Assumed \\[1mm]
$q$ & Rate of infected birds cured well & $0\sim1$ & Assumed \\[1mm]
\bottomrule
\end{tabular}\label{Table1}
\end{table}

It follows from the second and third equations of model \eqref{System-1} that
$$
\mathrm{d}S_b(t)/\mathrm{d}t+\mathrm{d}I_b(t)/\mathrm{d}t=0,
$$
which means the total number of birds is a constant. We might as well set $S_b(t)+I_b(t)=N_b$. Model \eqref{System-1} is equivalent to the following simplified model
\begin{equation}\label{System-2}
\left\{\begin{aligned}&\left.
\begin{aligned}
& \frac{\mathrm{d}M(t)}{\mathrm{d}t}=\mu_m M(t)\left(1-\frac{M(t)}{K_m}\right)-\delta_m M(t)\\
&\frac{\mathrm{d}I_b(t)}{\mathrm{d}t}=c\beta_{bm}\left(1-\frac{ I_b(t)}{N_b}\right)M(t)-\mu_{b}I_{b}(t)
\end{aligned}\right\}
 I_b(t)< H_b,\\
&\left.
\begin{aligned}
&M(t^+)=(1-p)M(t)\\
&I_b(t^+)=(1-q)I_b(t)
\end{aligned}
\right\}
I_b(t)= H_b.
\end{aligned}
\right.
\end{equation}

The global existence and uniqueness of solution for model
\eqref{System-2} is guaranteed by the smoothness of the
right-hand sides of model \eqref{System-2}. For more
details, we refer to \cite{VLakshmikantham}. To discuss model \eqref{System-2}, the generalized planar impulsive semi-dynamical systems with state-dependent feedback control can be firstly introduced as follows:

Letting $\Omega =I(\Gamma)$ is the phase set (i.e. for any $ Z\in \Gamma$, $I(Z)=Z^+\in \Omega $), and $\Omega \bigcap \Gamma=\emptyset$. Obviously, system (\ref{System-2}) is generally known as a planar impulsive semi-dynamical system, where impulsive set $\Gamma=\{ (M, I_b)\in R^2: M > 0, I_b=H_b\}$ is a subset of $R^2$  and continuous function
$$
I: (M, H_b)\in \Gamma \rightarrow (M^+, I_b^+)=((1-p)M,(1-q)H_b)\in\Omega.
$$
It follows that the phase set is
$$
\Omega=I(\Gamma)=\{(M^+, I_b^+)\in R^2 : M^+ =(1-p)M,  I_b^+ =(1-q)H_b\}.
$$

For any $Z\in R^2$, we define the positive orbit as
follows
$$
\Pi^+(Z_0,t_0)=\{Z(t)=(M(t),I_b(t)): t\geq
t_0,Z(t_0)=Z_0\}.
$$

We mainly discuss the existence of periodic solution of model \eqref{System-2} by the existence criteria of the general
impulsive autonomous system. Before introducing the existence criteria, we introduce and quote some topological definitions  about impulsive differential equations as following, more details to see \cite{Li}.

\begin{definition}\label{definition 2}
A trajectory $\Pi^+(Z)$ of $(R^2, \Pi; \Gamma, I)$  is said to be order $k$ periodic if there exist nonnegative
integers $m$ and $k$ such that for which $I^m(Z)=I^{m+k}(Z)$ and $Z\in \Gamma$.
\end{definition}

\begin{definition}\label{definition 3}
For any two points $A_i(N_{mi}, H_b), A_j(N_{mj}, H_b) \in \Gamma$ experiencing after one time impulsive effect are ${A}_i^+(N_{mi}^+, (1-q)H_b), {A}_j^+(N_{mj}^+, (1-q)H_b) \in \Omega$, respectively, where $N_{mi}^+ =(1-p)N_{mi}, N_{mj}^+=(1-p)N_{mj}$, $i,j=0,1,2\cdots$ then we called $\widetilde{A}_i^+$ =$\widetilde{A}_j^+$ if and only if $N_{mi}^+=N_{mj}^+$; $\widetilde{A}_i^+ < \widetilde{A}_j^+$ if and only if $N_{mi}^+ < N_{mj}^+$; $\widetilde{A}_i^+ > \widetilde{A}_j^+$ if and only if $N_{mi}^+ > N_{mj}^+$.
\end{definition}
In order to investigate the existence of order-$k$ periodic solution. The Poincar$\acute{e}$ map is needed to be constructed firstly.
Assuming that any trajectory $\Pi({Z_0},t_0)$ with initial value ${Z_0} =(N_{m0}, I_{b0})\in \Omega$ experiences impulsives $k+1$ times(finite or infinite), then the corresponding coordinates are denoted as $M_i=(N_{mi}, H_b)\in\Gamma$ and $M_i^+=(N_{mi}^+, (1-q)H_b)\in\Omega (i=1,2,3,\cdots,k)$, where the point $M_i^+$ is the impulsive point of $M_i$ after one time impulsive effect. Therefore, if both points $M_i^+$ and $M_{i+1}$ lie in the same trajectory $\Pi({z_0},t_0)$ for $i=1,2,\cdots,k$, then the point $M_{i+1}^+$ is only determined by $M_i^+$, which can be expressed by $N_{m(i+1)}=f(N_{mi}^+)$. Clearly, function $f$ is continuously differentiable accordingly to the Cauchy-Lipchitz theorem. Therefore, the Poincar\'e map $\digamma$ can be defined as
\begin{equation}\label{equation-1}
N_{m(i+1)}^+=(1-p)f(N_{mi}^+)\triangleq\digamma(N_{mi}^+),i=1,2\cdots k.
\end{equation}

From \eqref{equation-1} we could know that impulsive point of model \eqref{System-2} defined in phase set can be described as
$N_{m(i+1)}^+=\digamma(N_{mi}^+)$. Since function $f$ is continuously differentiable with respect to $N_{mi}^+$ under the $\mu_m >\delta_m$, consequently Poincar\'e map $\digamma$ is continuously differentiable with respect to $N_{mi}^+$.
From a biological
point of view, we only consider model \eqref{System-2} in the region $\mathbb{R}_+^2=\{(M,I_b): M\geq 0, I_b\geq 0\}$. Obviously, $\mathbb{R}_+^2$ is divided into four domains with
vertical isocline $\mathrm{d}M(t)/\mathrm{d}t=0$ and  horizontal isocline
$\mathrm{d}I_b(t)/\mathrm{d}t=0$, followed by
\begin{equation}\label{Domain}
\begin{aligned}
&\Omega_1:=\left\{(M,I_b)\in\mathbb{R}_+^2: \frac{\mathrm{d}M(t)}{\mathrm{d}t}>0,
 \frac{\mathrm{d}I_b(t)}{\mathrm{d}t}<0\right\},\\
&\Omega_2:=\left\{(M,I_b)\in\mathbb{R}_+^2:\frac{\mathrm{d}M(t)}{\mathrm{d}t}>0,
\frac{\mathrm{d}I_b(t)}{\mathrm{d}t}>0\right\},\\
&\Omega_3:=\left\{(M,I_b)\in\mathbb{R}_+^2:\frac{\mathrm{d}M(t)}{\mathrm{d}t}<0,
\frac{\mathrm{d}I_b(t)}{\mathrm{d}t}>0\right\},\\
&\Omega_4:=\left\{(M,I_b)\in\mathbb{R}_+^2:\frac{\mathrm{d}M(t)}{\mathrm{d}t}<0,
\frac{\mathrm{d}I_b(t)}{\mathrm{d}t}<0\right\}.
\end{aligned}
\end{equation}

To discuss the stability of this positive periodic solution of model \eqref{System-2}, we give the following definitions.

\begin{definition}[Orbital stability\cite{Simeonov}]
Trajectory $\Pi^+(Z_0,t_0)$ is said to be orbitally stable if for
any given $\varepsilon>0$, there exists a constant
$\delta=\delta(\varepsilon)>0$ such that for any other solution $Z^*(t)$ of
model (\ref{System-2}), $\rho(Z^*(t),\Pi^+(Z_0,t_0))<\varepsilon$ for
all $t>t_0$ when $\rho(Z^*(t_0),\Pi^+(Z_0,t_0))<\delta$.
\end{definition}

\begin{definition}[Orbitally asymptotical stability\cite{Simeonov}]
Trajectory $\Pi^+(Z_0,t_0)$ is said to be orbitally asymptotically stable
if it is orbitally stable, and there exists a constant $\eta>0$ such that for
any other solution $Z^*(t)$ of model \eqref{System-2},
$\lim_{t\rightarrow\infty}\rho(z^*(t),\Pi^+(Z_0,t_0))=0$ when
$\rho(Z^*(t_0),\Pi^+(Z_0,t_0))<\eta$.
\end{definition}

 $P, Q, \xi, \eta$ are continuous functions from $\mathbb{R}^2$ into $\mathbb{R}$, $\Gamma \subseteq \mathbb{R}^2$ denotes the impulsive set. We denote $M^+ =M(t^+)$, $I_b^+ =I_b(t^+)$, $M(t)=M$ and $I_b(t)=I_b$ for simplicity. For each point $Z(M, I_b)\in \Gamma$, $Z^+ =I(Z)=(M + \xi (M,I_b), I_b + \eta (M,I_b))=(M^+, I_b^+) \in \mathbb{R}^2$. $Z^+$ is called an impulsive point of $Z$.

Next, we consider the autonomous system with impulsive effects
\begin{equation}\label{System-3}
\left\{
\begin{aligned}
&\frac{\mathrm{d}x}{\mathrm{d}t}=f(x,y),\quad
\frac{\mathrm{d}y}{\mathrm{d}t}=g(x,y),\qquad &\varphi(x,y)\neq
0,\\
&\triangle x=\xi(x,y),\quad \triangle y=\eta(x,y),& \varphi(x,y)=0,
\end{aligned}
\right.
\end{equation}
where $f$ and $g$ are continuous differentiable  functions defined on $\mathbb{R}^2$ and $\varphi$ is a sufficiently smooth function with $\nabla \varphi\neq 0$. Let $(\widetilde{x}(t),\widetilde{y}(t))$ be a positive $T$-periodic solution of system \eqref{System-3}. The following result comes from corollary 2 of Theorem 1 of \cite{Simeonov}.

\begin{lemma}[Analogue of Poincar\'e criterion]\label{lemma2.1}
 If
the Floquet multiplier $\mu$ satisfies   $|\mu|<1$, where
$$
\mu=\prod_{j=1}^n\kappa_j\exp{\left\{\int_0^T\left[\frac{\partial
f(\widetilde{x}(t),\widetilde{y}(t))}{\partial x}+\frac{\partial
g(\widetilde{x}(t),\widetilde{y}(t))}{\partial y}\right]\,\mathrm{d}t\right\}}
$$
with
$$
\kappa_j=\frac{(\frac{\partial\eta}{\partial
y}\frac{\partial\varphi}{\partial x}-\frac{\partial\eta}{\partial
x}\frac{\partial\varphi}{\partial y}+\frac{\partial\varphi}{\partial
x})f_++(\frac{\partial\xi}{\partial
x}\frac{\partial\varphi}{\partial y}-\frac{\partial\xi}{\partial
y}\frac{\partial\varphi}{\partial x}+\frac{\partial\varphi}{\partial
y})g_+}{\frac{\partial\varphi}{\partial
x}f+\frac{\partial\varphi}{\partial y}g}
$$
and $f$, $g$, $\partial\xi/\partial x$, $\partial\xi/\partial y$, $\partial\eta/\partial x$, $\partial\eta/\partial y$, $\partial\varphi/\partial x$, and $\partial\varphi/\partial y$ have been calculated at the point $(\mu(\tau_j),\nu(\tau_j))$, $f_+=f(\mu(\tau_j^+),\nu(\tau_j^+))$, $g_+=g(\mu(\tau_j^+),\nu(\tau_j^+))$, and $\tau_j$ $(j\in N)$ is the time of the $j$-th jump. Then, $(\widetilde{x}(t),\widetilde{y}(t))$ is orbitally asymptotically stable.
\end{lemma}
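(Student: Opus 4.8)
This statement is the impulsive analogue of the classical Poincar\'e (Andronov--Witt) criterion and is quoted from \cite{Simeonov}; accordingly the plan is to reconstruct that proof by reducing orbital asymptotic stability of the $T$-periodic orbit $(\widetilde{x}(t),\widetilde{y}(t))$ to a spectral estimate on the monodromy operator of the linearized impulsive flow. First I would write a nearby solution as $z(t)=(\widetilde{x}(t),\widetilde{y}(t))+\psi(t)$ and derive the variational equation: on each interval between successive jumps the perturbation obeys the linear system $\dot{\psi}=A(t)\psi$, where $A(t)$ is the Jacobian of $(f,g)$ along the orbit, while at each crossing of the impulsive surface $\{\varphi=0\}$ the perturbation is transported by a linear jump (saltation) matrix $S_j$. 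Composing the inter-impulse fundamental matrices with the $S_j$ over one period yields the monodromy matrix, whose eigenvalues are the Floquet multipliers; I would then show that the nontrivial multiplier coincides with the claimed $\mu$ and invoke contraction of the induced Poincar\'e map when $|\mu|<1$.

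The two building blocks are computed separately. For the smooth part, Liouville's (Abel--Jacobi) formula gives the determinant of the accumulated inter-impulse fundamental matrix as $\exp\{\int_0^T[\partial f/\partial x+\partial g/\partial y]\,\mathrm{d}t\}$, which is exactly the exponential factor in $\mu$. For the jump part I would linearize the composite map ``flow to the surface, apply the reset $I$, synchronize back to the reference hitting time.'' Since a neighbouring trajectory meets $\{\varphi=0\}$ at a first-order shifted time $\delta\tau$ fixed by the transversality relation $\nabla\varphi\cdot(f,g)\neq0$, pushing the perturbation through the reset Jacobian
\[
B=\begin{pmatrix}1+\xi_x & \xi_y\\ \eta_x & 1+\eta_y\end{pmatrix}
\]
(subscripts denoting partial derivatives) and correcting for $\delta\tau$ produces the rank-one update
\[
S_j=B+\frac{\big(F_+-BF\big)\,\nabla\varphi^{\top}}{\nabla\varphi\cdot F},\qquad F=(f,g)^{\top},\ F_+=(f_+,g_+)^{\top}.
\]
I expect the careful bookkeeping of this return-time shift to be the main obstacle, as it is precisely what distinguishes the impulsive criterion from the classical one.

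It remains to identify the scalar $\kappa_j$ and to assemble the multiplier. A direct check shows $S_jF=F_+$, so the orbit-tangent direction is carried through every jump; hence the monodromy always has the trivial Floquet multiplier $1$ associated with motion along the orbit, and the transverse multiplier equals the determinant of the monodromy. Evaluating that determinant factorwise, the smooth part contributes the exponential above while each jump contributes $\det S_j$. Computing $\det S_j$ by the matrix determinant lemma $\det(B+uv^{\top})=\det B+v^{\top}\mathrm{adj}(B)\,u$ and using $\mathrm{adj}(B)\,B=(\det B)\,\mathbb{I}$, the $\det B$ terms cancel and leave
\[
\det S_j=\frac{\nabla\varphi^{\top}\,\mathrm{adj}(B)\,F_+}{\nabla\varphi\cdot F},\qquad \mathrm{adj}(B)=\begin{pmatrix}1+\eta_y & -\xi_y\\ -\eta_x & 1+\xi_x\end{pmatrix},
\]
which, upon expanding the numerator, is precisely $\kappa_j$ as stated. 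Multiplying over one period therefore gives $\mu=\prod_j\kappa_j\exp\{\int_0^T[\partial f/\partial x+\partial g/\partial y]\,\mathrm{d}t\}$, and $|\mu|<1$ forces the Poincar\'e iterates of any nearby solution to contract onto the orbit, so $(\widetilde{x}(t),\widetilde{y}(t))$ is orbitally asymptotically stable, which is the conclusion of \cite{Simeonov}.
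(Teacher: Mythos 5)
Your reconstruction is essentially correct, but note that the paper does not prove this lemma at all: it is quoted verbatim as Corollary~2 of Theorem~1 of the cited work of Simeonov and Bainov, so there is no in-paper argument to compare against. What you have written is the standard saltation-matrix proof from that reference: the variational equation between jumps contributes $\exp\{\int_0^T(\partial f/\partial x+\partial g/\partial y)\,\mathrm{d}t\}$ by Liouville's formula, the linearized ``hit the surface, reset, resynchronize'' map is the rank-one update $S_j=B+(F_+-BF)\nabla\varphi^{\top}/(\nabla\varphi\cdot F)$, the identity $S_jF=F_+$ propagates the tangent direction so that $1$ is always a Floquet multiplier, and the matrix determinant lemma collapses $\det S_j$ to exactly the stated $\kappa_j$ (I checked the expansion of $\nabla\varphi^{\top}\,\mathrm{adj}(B)\,F_+$ against the formula for $\kappa_j$ and it matches term by term). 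Two points you should make explicit rather than leave implicit: first, the transversality condition $\nabla\varphi\cdot F\neq 0$ at each hitting time $\tau_j$ is a genuine hypothesis (it is the nonvanishing of the denominator of $\kappa_j$ and of the implicit-function argument fixing $\delta\tau$), and each nearby trajectory must cross $\{\varphi=0\}$ the same number of times per period; second, the final step ``$|\mu|<1$ forces contraction of the Poincar\'e iterates'' deserves a sentence identifying the transversal section (here, the phase set $\Omega$) and noting that the derivative of the return map there equals the nontrivial multiplier, which is the Andronov--Witt part of the argument. With those caveats your proposal is a faithful and complete account of the proof the paper outsources to its reference.
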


\section{Stability analysis of equilibria for model (\ref{System-2}) without impulsive effect}

In this section, we study the qualitative characteristic of model \eqref{System-2} without impulsive effect. If impulsive effect is not introduced, then model \eqref{System-2} is
\begin{equation}\label{System-4}
\left\{
\begin{aligned}
&\frac{\mathrm{d}M(t)}{\mathrm{d}t}=\mu_m M(t)\left(1-\frac{M(t)}{K_m}\right)-\delta_m M(t):=P(M, I_b)\\
&\frac{\mathrm{d}I_b(t)}{\mathrm{d}t}=c\beta_{bm}\left(1-\frac{ I_b(t)}{N_b}\right)M(t)-\mu_{b}I_{b}(t):=Q(M, I_b).
\end{aligned}
\right.
\end{equation}

Clearly, if $\mu_m <\delta_m$, then model \eqref{System-4} only has a unique disease-free equilibrium $E(0,0)$; if $\mu_m > \delta_m$, then model \eqref{System-4} has a disease-free equilibrium $E(0,0)$ and a endemic equilibrium $E^*(M^*,I_b^*)$, where
$$
M^*=\frac{K_m(\mu_m-\delta_m)}{\mu_m},\qquad
I_b^*=\frac{c\beta_{bm}k_{m}N_b(\mu_m-\delta_m)}{c\beta_{bm}K_m(\mu_m-\delta_m)+\mu_m \mu_b N_b}.
$$

To investigate the stability of equilibrium, linearizing model \eqref{System-4} around the point $E(V_m, I_b)$ yields the Jacobian matrix
$$
J(V_m, I_b)=\begin{pmatrix}
\mu_m-\delta_m-\frac{2\mu_m M(t)}{N_b} &\quad 0\\
c\beta_{bm}\left(1-\frac{I_b(t)}{N_b}\right) &\quad -\frac{c\beta_{bm}M(t)}{N_b}-\mu_b
\end{pmatrix}.
$$
From the eigenvalues of the Jacobin matrix at equilibrium $E(0, 0)$ and $E^*(M^*, I_b^*)$ in respective cases, we easily obtain the following Proposition \ref{proposition-1}.
\begin{proposition}\label{proposition-1}
If $\mu_m>\delta_m$, then disease-free equilibrium $E(0, 0)$ of model \eqref{System-4} is a saddle point and the endemic equilibrium $E^*(M^*, I_b^*)$ of model \eqref{System-4} is a locally stable node point.
\end{proposition}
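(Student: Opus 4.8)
The plan is to read off the local behaviour at each equilibrium directly from the Jacobian matrix $J(M,I_b)$, exploiting the fact that it is lower triangular. Since $P$ does not depend on $I_b$, the $(1,2)$ entry $\partial P/\partial I_b$ vanishes identically, so the eigenvalues of $J$ at any point are exactly its two diagonal entries
$$
\lambda_1=\mu_m-\delta_m-\frac{2\mu_m M}{K_m},\qquad \lambda_2=-\frac{c\beta_{bm}M}{N_b}-\mu_b.
$$
Both are real, so to classify each equilibrium it suffices to evaluate these two expressions at the equilibrium in question and inspect their signs under the standing hypothesis $\mu_m>\delta_m$.

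First I would treat the disease-free equilibrium $E(0,0)$. Setting $M=0$ gives $\lambda_1=\mu_m-\delta_m$ and $\lambda_2=-\mu_b$. Under $\mu_m>\delta_m$ the first eigenvalue is strictly positive while the second is strictly negative (as $\mu_b>0$), so $J(0,0)$ has two real eigenvalues of opposite sign and $E(0,0)$ is a saddle point.

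Next I would treat the endemic equilibrium $E^*(M^*,I_b^*)$. Substituting $M^*=K_m(\mu_m-\delta_m)/\mu_m$ into $\lambda_1$ and simplifying collapses the expression to $\lambda_1=-(\mu_m-\delta_m)$, which is strictly negative by hypothesis; meanwhile $\lambda_2=-c\beta_{bm}M^*/N_b-\mu_b$ is manifestly negative since $M^*>0$. Hence $J(M^*,I_b^*)$ has two negative real eigenvalues and $E^*$ is a locally (asymptotically) stable node, which completes the classification.

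The argument involves no genuine obstacle: the triangular structure of $J$ removes any need to form a characteristic polynomial or examine a discriminant, and the only mildly non-routine step is the cancellation in $\lambda_1$ at $E^*$, where the term $2\mu_m M^*/K_m=2(\mu_m-\delta_m)$ turns $\mu_m-\delta_m$ into its negative. I would simply record this simplification and the resulting sign pattern to conclude.
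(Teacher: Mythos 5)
Your proposal is correct and follows the same route as the paper: linearize, observe that the Jacobian is (lower) triangular so its eigenvalues are the diagonal entries, and check their signs at $E(0,0)$ and $E^*(M^*,I_b^*)$ under $\mu_m>\delta_m$. The paper states the Jacobian and asserts the conclusion without writing out the sign computation (and even contains a typo, $N_b$ in place of $K_m$ in the $(1,1)$ entry), so your explicit evaluation — in particular the cancellation giving $\lambda_1=-(\mu_m-\delta_m)$ at $E^*$ — is exactly the omitted detail.
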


\begin{proposition}\label{proposition-3}
Model \eqref{System-4} has no closed orbit.
\end{proposition}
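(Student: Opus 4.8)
The plan is to apply the Bendixson--Dulac criterion to rule out periodic orbits of the planar system \eqref{System-4}. A direct application of Bendixson's theorem fails here: the divergence $\partial P/\partial M + \partial Q/\partial I_b = \mu_m - \delta_m - 2\mu_m M/K_m - c\beta_{bm}M/N_b - \mu_b$ is positive for small $M$ (when $\mu_m - \delta_m > \mu_b$) and negative for large $M$, so it changes sign across $\mathbb{R}_+^2$ and gives no information. The key idea is therefore to introduce a suitable Dulac multiplier that removes this sign change.

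I would choose the Dulac function $B(M, I_b) = 1/M$, which is smooth and positive on the biologically relevant open half-plane $\{(M, I_b) : M > 0\}$. Multiplying the vector field by $B$ and computing the divergence of $(BP, BQ)$, the logistic and bilinear terms simplify considerably, since $BP = \mu_m(1 - M/K_m) - \delta_m$ no longer depends on $I_b$ in a complicated way. Carrying out the two partial derivatives gives
$$
\frac{\partial (BP)}{\partial M} + \frac{\partial (BQ)}{\partial I_b} = -\frac{\mu_m}{K_m} - \frac{c\beta_{bm}}{N_b} - \frac{\mu_b}{M},
$$
which is strictly negative for every $M > 0$. Since the open right half-plane is simply connected and this expression does not change sign (indeed never vanishes) there, Dulac's criterion guarantees that \eqref{System-4} has no closed orbit lying entirely in $\{M > 0\}$.

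It remains to dispose of the degenerate possibility of a closed orbit meeting the boundary $M = 0$. Because $\mathrm{d}M/\mathrm{d}t = 0$ whenever $M = 0$, the $I_b$-axis is invariant, so no trajectory can cross it; consequently any closed orbit must lie entirely in $\{M > 0\}$ or entirely in $\{M = 0\}$. The former is excluded above, and on $\{M = 0\}$ the dynamics reduce to $\mathrm{d}I_b/\mathrm{d}t = -\mu_b I_b$, whose solutions decay monotonically and admit no periodic orbit. Combining the two cases shows that \eqref{System-4} has no closed orbit in $\mathbb{R}_+^2$. The only genuinely nonroutine step is the choice of the Dulac multiplier $B = 1/M$; once it is in hand, the divergence computation and the boundary argument are both straightforward.
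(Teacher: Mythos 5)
Your proposal is correct and follows essentially the same route as the paper: the Bendixson--Dulac criterion with the multiplier $B(M,I_b)=1/M$. In fact your divergence computation $-\mu_m/K_m-c\beta_{bm}/N_b-\mu_b/M$ is the accurate one (the paper's displayed expression has $\mu_b$ in place of $\mu_b/M$, which does not affect the sign), and your explicit treatment of the invariant boundary $M=0$ is a small but welcome addition.
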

\begin{proof}
We choose Dulac function $B(M, I_b)=1/M$, which is continuously derivative function.
$$
\begin{aligned}
\frac{\partial (BP)}{\partial M}+\frac{\partial (BQ)}{\partial I_b}=&
-\frac{\mu_m}{K_m}-\frac{c\beta_{bm}}{N_b}-\mu_b < 0.
\end{aligned}
$$
According to the Bendixon Theorem \cite{Simeonov}, there is no closed orbit in the first quadrant. The proof is complete.
\end{proof}

From the above discussing, we can come to the following conclusion.
\begin{theorem}\label{Theorem-1}
Model \
If $\mu_m < \delta_m $, then model \eqref{System-4} admits only a globally asymptotically stable disease-free
equilibrium $E(0,0)$ and no endemic equilibrium. Further, if $\mu_m > \delta_m$, then model \eqref{System-4} exists an unstable equilibrium $E(0,0)$ and a unique globally asymptotically stable endemic equilibrium point $E^*(M^*, I_b^*)$.
\end{theorem}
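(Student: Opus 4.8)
The plan is to exploit the triangular (cascade) structure of model \eqref{System-4}: the first equation for $M$ is completely decoupled from $I_b$, so I would first solve it as a scalar autonomous ODE and then treat the second equation as a nonautonomous linear equation in $I_b$ driven by the already-understood trajectory $M(t)$.

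First I would analyze the scalar equation $\mathrm{d}M/\mathrm{d}t=M[(\mu_m-\delta_m)-\mu_m M/K_m]$. When $\mu_m<\delta_m$ the bracket is negative for every $M>0$, so $M(t)$ decreases monotonically to $0$; moreover $M^*=K_m(\mu_m-\delta_m)/\mu_m<0$ lies outside the biological quadrant, so $E(0,0)$ is the only equilibrium. When $\mu_m>\delta_m$ this is a standard logistic equation with carrying level $M^*>0$, whence $M(t)\to M^*$ for every $M(0)>0$ while $M=0$ is repelling. These are elementary one-dimensional phase-line arguments.

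Next I would feed the limit of $M(t)$ into the second equation, which is linear in $I_b$ with strictly negative coefficient $-(c\beta_{bm}M(t)/N_b+\mu_b)$ and forcing term $c\beta_{bm}M(t)$. In the case $\mu_m<\delta_m$, since $0\le c\beta_{bm}(1-I_b/N_b)M(t)\le c\beta_{bm}M(t)\to0$, a squeeze against $\mathrm{d}w/\mathrm{d}t=c\beta_{bm}M(t)-\mu_b w$ forces $I_b(t)\to0$, so $(M,I_b)\to E(0,0)$ globally; together with the fact that $J(0,0)$ has the two negative eigenvalues $\mu_m-\delta_m$ and $-\mu_b$, this gives global asymptotic stability of $E(0,0)$ and the nonexistence of an endemic equilibrium. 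In the case $\mu_m>\delta_m$, the $I_b$-equation is asymptotically autonomous with limit equation $\mathrm{d}I_b/\mathrm{d}t=c\beta_{bm}(1-I_b/N_b)M^*-\mu_b I_b$, whose unique equilibrium $I_b^*$ is globally attracting because the equation is linear with negative $I_b$-coefficient; invoking the theory of asymptotically autonomous systems (or squeezing $M(t)$ between $M^*\pm\varepsilon$ and letting $\varepsilon\to0$) yields $I_b(t)\to I_b^*$. Combined with the local stability of $E^*$ and the saddle nature of $E(0,0)$ from Proposition \ref{proposition-1}, this shows $E^*(M^*,I_b^*)$ is globally asymptotically stable while $E(0,0)$ is unstable.

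I expect the only delicate step to be the passage, in the $I_b$-equation, from convergence of the driving term $M(t)$ to convergence of $I_b(t)$ to the correct limit: one must either cite an asymptotically-autonomous-systems result or carry out the two-sided comparison carefully, since a naive substitution of the limit of $M$ is not by itself rigorous. An alternative route that sidesteps this point is the planar Poincar\'e--Bendixson theorem: Proposition \ref{proposition-3} already rules out closed orbits via the Dulac function, the rectangle $\{0\le M\le M^*,\,0\le I_b\le N_b\}$ is readily checked to be positively invariant (the field points inward on $I_b=0$ and $I_b=N_b$, and is tangent on $M=M^*$), and when $\mu_m>\delta_m$ the stable manifold of the saddle $E(0,0)$ is exactly the invariant line $M=0$; hence no interior orbit can have $E(0,0)$ in its $\omega$-limit set, forcing every interior orbit to converge to $E^*$.
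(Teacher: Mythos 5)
Your proposal is correct, but your primary argument is genuinely different from the paper's. The paper offers essentially no written proof of Theorem \ref{Theorem-1}: it states the Jacobian-based local classification (Proposition \ref{proposition-1}) and the Dulac-function exclusion of closed orbits (Proposition \ref{proposition-3}), and then asserts the theorem ``from the above discussing,'' i.e., it implicitly invokes the Poincar\'e--Bendixson theorem in the plane. That is precisely your \emph{alternative} route, and you actually supply the details the paper omits --- the positively invariant (eventually absorbing) region, the boundedness of forward orbits, and the observation that the stable manifold of the saddle $E(0,0)$ is the invariant line $M=0$, so no interior orbit can have the origin in its $\omega$-limit set. Your \emph{main} route instead exploits the triangular structure: the decoupled logistic equation for $M$ settles the first coordinate by a one-dimensional phase-line argument, and the $I_b$-equation, being linear in $I_b$ with coefficient bounded above by $-\mu_b<0$, is handled by a two-sided comparison (or by citing asymptotic autonomy). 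This cascade argument is more elementary and self-contained --- it needs neither the Dulac function nor Poincar\'e--Bendixson, covers the $\mu_m<\delta_m$ case (which Proposition \ref{proposition-1} does not address) explicitly, and generalizes to higher-dimensional triangular systems --- at the cost of the one delicate step you correctly flag, namely passing from $M(t)\to M^*$ to $I_b(t)\to I_b^*$, which must be done by the $\varepsilon$-squeeze rather than by naive substitution. The Poincar\'e--Bendixson route buys a uniform planar argument that does not care about the decoupling, which is why the paper's propositions are organized around it; either route, completed as you describe, proves the theorem.
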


\section{Dynamics of model (\ref{System-2}) with impulsive control}

In this section, we shall investigate the occurrence of periodic behavior for model \eqref{System-2}. Since virus equilibrium $(M^*,I_b^*)$ is globally asymptotically stable for
model \eqref{System-2} without impulsive control, any solution of model \eqref{System-2} without state
feedback control strategies will eventually tend to $(M^*,I_b^*)$. Therefore, we easily see that trajectory of model \eqref{System-2} with initial value $(N_{m0},I_{b0})\in \Omega$ will intersects impulsive set $\Gamma$ infinitely many times for $H_b<I_b^*$. Letting the coordinates of isocline $I_b'(t)=0$ intersecting line $I_b=(1-q)H_b$ and line $I_b=H_b$ is $H_q(N_{mq}, (1-q)H_b)$ and $H_h(N_{mh}, H_b)$ respectively, where
$$
N_{mq}=\frac{\mu_b N_b(1-q)H_b}{c\beta_{bm}(N_b-(1-q)H_b)}, \quad N_{mh}=\frac{\mu_b N_b H_b}{c\beta_{bm}(N_b-H_b)}.
$$
And assuming the point $H_h(N_{mh}, H_b)$ of impulsive effects are $H_h^+(N_{mh}^+, (1-q)H_b)$. Correspondingly, the coordinates of vertical isocline $\mathrm{d}M(t)/\mathrm{d}t=0$ hitting line $I_b=(1-q)H_b$ and line $I_b=H_b$ are $C_1^+(M^*, (1-q)H_b)$ and $C_2(M^*, H_b)$ respectively. In this subsection, we shall give some sufficient conditions for the existence and stability of positive periodic solutions in the case of $I_b\leq H_b$.

On the positivity of solutions of model \eqref{System-2}, we have, firstly, the following result
\begin{theorem}\label{Theorem-2}
Solutions of model \eqref{System-2}  with the initial value in $\mathbb{R}_+^2$ at time $t=t_0\geq 0$ are positive.
\end{theorem}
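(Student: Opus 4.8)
The plan is to show that the nonnegative orthant $\mathbb{R}_+^2$ is positively invariant both for the smooth flow between impulses and under the jump map, and then to glue these two facts together along the (finite or infinite) sequence of impulse times. Because model \eqref{System-2} is autonomous and its right-hand side is continuously differentiable, on each interval between consecutive impulses the solution coincides with a trajectory of the continuous system \eqref{System-4}; so it suffices to analyse the continuous vector field on the boundary axes and then to check that the impulsive map sends $\mathbb{R}_+^2$ into itself.

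First I would treat the $M$-component. Rewriting the first equation as $\mathrm{d}M/\mathrm{d}t = M\big[\mu_m(1-M/K_m)-\delta_m\big]$ exhibits $M$ as a common factor, so for an initial value $M(t_0)\geq 0$ one obtains the integral representation
$$
M(t)=M(t_0)\exp\left\{\int_{t_0}^{t}\Big[\mu_m\Big(1-\tfrac{M(s)}{K_m}\Big)-\delta_m\Big]\,\mathrm{d}s\right\}.
$$
Since the exponential factor is strictly positive, $M(t)$ keeps the sign of $M(t_0)$; in particular $M(t)\geq 0$ (and $M(t)>0$ whenever $M(t_0)>0$) for as long as the continuous flow is followed. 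Equivalently, one may note that $M\equiv 0$ solves the first equation, so by uniqueness no trajectory starting with $M>0$ can reach the line $M=0$ in finite time.

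Next I would handle the $I_b$-component, using the bound just obtained for $M$. The second equation is linear in $I_b$ with time-dependent coefficients: $\mathrm{d}I_b/\mathrm{d}t = c\beta_{bm}M(t)-\big(c\beta_{bm}M(t)/N_b+\mu_b\big)I_b$. Solving by the integrating-factor method gives
$$
I_b(t)=e^{-\Lambda(t)}\left[I_b(t_0)+\int_{t_0}^{t} c\beta_{bm}M(s)\,e^{\Lambda(s)}\,\mathrm{d}s\right],\quad \Lambda(t)=\int_{t_0}^{t}\Big(\tfrac{c\beta_{bm}M(s)}{N_b}+\mu_b\Big)\mathrm{d}s.
$$
Because $M(s)\geq 0$ from the previous step, the integrand is nonnegative, and with $I_b(t_0)\geq 0$ the bracketed term is nonnegative; hence $I_b(t)\geq 0$. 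This is the vector-field-on-the-boundary argument made explicit: when $I_b=0$ one has $\mathrm{d}I_b/\mathrm{d}t=c\beta_{bm}M\geq 0$, so the flow cannot push $I_b$ below zero.

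Finally I would incorporate the impulses and assemble the conclusion. At any impulse time the state is mapped by $M\mapsto(1-p)M$ and $I_b\mapsto(1-q)I_b$; since $0\leq p\leq1$ and $0\leq q\leq1$ (Table \ref{Table1}), these factors are nonnegative, so the jump map sends $\mathbb{R}_+^2$ into $\mathbb{R}_+^2$. Combining invariance under the continuous flow on each inter-impulse interval with invariance under the jump map, and inducting over the impulse times, yields $M(t)\geq 0$ and $I_b(t)\geq 0$ for all $t\geq t_0$. The argument is essentially routine; the only point requiring care is the bookkeeping across impulse moments — one must restart the integral representations from each post-impulse state $(M(t_i^+),I_b(t_i^+))$ — but since each such state again lies in $\mathbb{R}_+^2$, the two invariance facts apply verbatim on the next interval.
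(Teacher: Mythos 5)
Your proof is correct. The paper itself gives no argument here --- it states only that ``the proof of Theorem \ref{Theorem-2} is obvious, hence we omit it here'' --- and your exponential representation for $M$, integrating-factor formula for $I_b$, invariance of $\mathbb{R}_+^2$ under the jump map $(M,I_b)\mapsto((1-p)M,(1-q)I_b)$, and induction over the impulse times is exactly the standard argument the authors are implicitly invoking.
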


The proof of Theorem \ref{Theorem-2} is obvious, hence we omit it here.

The following result is the existence of positive order-1 periodic solution.
\begin{theorem}\label{Theorem-3}
For any $p, q\in(0,\;1)$ and $\mu_m>\delta_m$, model \eqref{System-2} admits a
positive order-1 periodic solution.
\end{theorem}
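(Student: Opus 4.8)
The plan is to recast the existence of a positive order-$1$ periodic solution as the existence of a fixed point of the Poincar\'e map $\digamma$ already constructed on the phase set $\Omega=\{I_b=(1-q)H_b\}$. Parametrizing a point of $\Omega$ by its mosquito coordinate $x=N_m^+>0$, the trajectory of \eqref{System-4} issued from $(x,(1-q)H_b)$ first meets the impulsive set $\Gamma=\{I_b=H_b\}$ at $(f(x),H_b)$, and the impulse then returns it to $\big((1-p)f(x),(1-q)H_b\big)$, so that $\digamma(x)=(1-p)f(x)$. A value $x^*$ with $\digamma(x^*)=x^*$ is exactly an order-$1$ periodic point (Definition \ref{definition 2}), and positivity of the whole orbit follows from Theorem \ref{Theorem-2}. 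Hence it suffices to produce a fixed point of the continuous map $g(x):=\digamma(x)-x$ on a suitable interval, which I would do by the intermediate value theorem.

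First I would check that $\digamma$ is well defined on all of $(0,\infty)$. Working in the relevant regime $H_b<I_b^*$ of the preceding discussion, Theorem \ref{Theorem-1} gives that $E^*(M^*,I_b^*)$ attracts every positive trajectory; since $I_b^*>H_b$, any solution starting on $\Omega$ with $I_b=(1-q)H_b<H_b$ must reach the level $I_b=H_b$ in finite time, so $f$, and therefore $\digamma=(1-p)f$, is defined everywhere on $\Omega$. Its continuity (indeed $C^1$ dependence) was already recorded after \eqref{equation-1} via the Cauchy--Lipschitz theorem, so $g$ is continuous.

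Next I would supply the two sign estimates that drive the IVT. For the left endpoint, note that at the first crossing of $\Gamma$ one has $\mathrm{d}I_b/\mathrm{d}t\ge 0$, and evaluating $Q(M,H_b)\ge 0$ forces $f(x)\ge N_{mh}=\dfrac{\mu_bN_bH_b}{c\beta_{bm}(N_b-H_b)}$; hence $\digamma(x)\ge(1-p)N_{mh}>0$ for every $x$, and choosing any $x_1\in\big(0,(1-p)N_{mh}\big)$ yields $g(x_1)=\digamma(x_1)-x_1>0$. For the right endpoint, observe that the $M$-component evolves autonomously by $\mathrm{d}M/\mathrm{d}t=M\big[(\mu_m-\delta_m)-\mu_mM/K_m\big]$, so if $x_2>M^*$ then $M(t)$ is strictly decreasing toward $M^*$ and never reaches it; consequently $f(x_2)<x_2$ and $\digamma(x_2)=(1-p)f(x_2)<(1-p)x_2<x_2$, i.e. $g(x_2)<0$. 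Since $h(I_b)=\mu_bN_bI_b/\big(c\beta_{bm}(N_b-I_b)\big)$ is increasing and $H_b<I_b^*$, we get $N_{mh}=h(H_b)<h(I_b^*)=M^*$, so the chain $x_1<(1-p)N_{mh}<N_{mh}<M^*<x_2$ guarantees $x_1<x_2$.

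Finally, continuity of $g$ on $[x_1,x_2]$ together with $g(x_1)>0>g(x_2)$ yields an $x^*\in(x_1,x_2)$ with $\digamma(x^*)=x^*$, whose associated closed orbit is the desired positive order-$1$ periodic solution. The step I expect to be the genuine obstacle is the well-definedness of the Poincar\'e map on the whole of $\Omega$: one must rule out that some trajectory fails to return to $\Gamma$, which rests on the global attractivity of $E^*$ from Theorem \ref{Theorem-1} and on the monotone (node-type) approach guaranteed by Propositions \ref{proposition-1} and \ref{proposition-3}, and which tacitly requires the threshold ordering $H_b<I_b^*$ used throughout this section; the monotonicity of $M(t)$ on the right endpoint and the boundary inequality $f(x)\ge N_{mh}$ on the left are then routine.
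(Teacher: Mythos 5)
Your proposal is correct and follows essentially the same route as the paper: both arguments reduce the statement to a fixed point of the Poincar\'e map $\digamma=(1-p)f$ on $\Omega$ and obtain it from two sign estimates, $\digamma(x)>x$ for $x$ to the left of $(1-p)N_{mh}$ and $\digamma(x)<x$ at (or to the right of) $M^*$, followed by the intermediate value theorem. Your version is in fact slightly more careful than the paper's, since you justify the lower bound $f(x)\geq N_{mh}$ via $Q(f(x),H_b)\geq 0$ at the first crossing and explicitly flag the standing hypothesis $H_b<I_b^*$ needed for the map to be well defined, both of which the paper leaves implicit.
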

\begin{proof}

Letting the point $M_1^+(\varepsilon, (1-q)H_b)$ in domain $I\!I$ satisfies that $\varepsilon$ is small enough and $0 < \varepsilon <(1-p)N_{mh}$. The trajectory starting from the point $M_1^+$ in domain $I\!I$ will enter into domain $I\!I\!I$ and intersects section $\Omega$ at point $M_{1p}^+(\varepsilon_{1p}, (1-q)H_b)$, and then reaches section $\Gamma$ at point $M_2(V_{m2}, H_b)$, where $N_{m2}\in (N_{mh}, M^*)$. At point $M_2$, trajectory $\Pi^+(M_1^+,t_0)$ maps to the point $M_2^+((1-p)N_{m2}, (1-q)H_b)$ on phase set $\Omega$, due to impulsive effects $\triangle M(t)=-pM(t)$ and $\triangle I_b(t)=-qH_b$. Since $\varepsilon \leq (1-p)N_{mh}$, point $M_1^+$ is left of point $M_2^+$. Therefore, it follows $N_{m2}^+ =\digamma(N_{m1}^+)$ and the following inequality must hold true from Poincar\'e map (\ref{equation-1})
\begin{equation}\label{Equation-0013}
\digamma(N_{m1}^+)-N_{m1}^+=N_{m2}^+ - N_{m1}^+ >0
\end{equation}

On the other hand, the vertical line $\mathrm{d}M(t)/\mathrm{d}t=0$ intersects line $I_b=(1-q)H_b$ and line $I_b=H_b$ at points $C_1^+(M^*, (1-q)H_b)$ and $C_2(M^*, H_b)$, respectively. Supposing that trajectory $\Pi^+(C_1^+,t_0)$ starting from the initial point $C_1^+$ reaches impulsive set $\Gamma$ at point $C_2(M^*, H_b)$ and jumps to point ${C_2}^+((1-p)M^*, (1-q)H_b)$. Obviously, we have point ${C_2}^+$ is left of point $C_1^+$. Thus, the following inequality holds
\begin{equation}\label{Equation-003}
\digamma(N_{m3}^+)-N_{m3}^+=N_{m4}^+ - N_{m3}^+ <0
\end{equation}
This together with \eqref{Equation-0013} gives that Poincar\'e map $\digamma$ has a fixed point, which corresponds to an order-1 periodic solution of model \eqref{System-2}. The proof is complete.
\end{proof}

Nextly, on the orbital stability of this periodic
solution of model \eqref{System-2}, we have the following
Theorem.

\begin{theorem}\label{Theorem-4}
Let $(\mu(t),\nu(t))$ be a positive order-1 periodic solution of model \eqref{System-2} with period $T$. If
\begin{equation*}\label{Equation-3}
\left|\mu\right|=\left|\kappa_1\right|
\exp\left\{-\int_0^T\left[\frac{\mu_m}{K_m}\mu(t)
+\mu_b+\frac{c\beta_{bm}}{N_b}\mu(t)\right]\,\mathrm{d} t\right\}<1,
\end{equation*}
where
\begin{equation*}\label{Equation-03}
\kappa_1=\frac{c\beta_{bm}[N_b-(1-q)H_b](1-p)\mu(T)
-\mu_b(1-q)H_bN_b}{c\beta_{bm}(N_b-H_b)\mu(T)-\mu_b H_b N_b}
\end{equation*}
then $(\mu(t),\nu(t))$ is orbitally asymptotically stable.
\end{theorem}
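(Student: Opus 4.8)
The plan is to invoke Lemma~\ref{lemma2.1} (the analogue of the Poincar\'e criterion) directly, so that the whole argument reduces to matching the data of the abstract system~\eqref{System-3} with model~\eqref{System-2} and then evaluating the Floquet multiplier $\mu$ along the order-1 periodic solution $(\mu(t),\nu(t))$, whose existence is already supplied by Theorem~\ref{Theorem-3}. First I would read off the dictionary $x=M$, $y=I_b$, $f=P$, $g=Q$, impulsive manifold $\varphi(M,I_b)=I_b-H_b$, and jump functions $\xi(M,I_b)=-pM$, $\eta(M,I_b)=-qI_b$, so that $\Delta M=\xi$ and $\Delta I_b=\eta$ reproduce exactly the reset rules $M(t^+)=(1-p)M(t)$, $I_b(t^+)=(1-q)I_b(t)$. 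This fixes a single jump time $\tau_1$ per period at which the orbit meets $\Gamma$.

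Next I would compute the continuous (exponential) part of $\mu$. Differentiating gives $\partial P/\partial M=\mu_m-\delta_m-2\mu_m M/K_m$ and $\partial Q/\partial I_b=-c\beta_{bm}M/N_b-\mu_b$, and their sum must be integrated along the orbit $M=\mu(t)$ over one period $T$ to produce the exponential factor in the statement. Here I would emphasize a correctness caveat: the $M$-component obeys the decoupled logistic-type law and is knocked down by the factor $1-p$ at each impact, so $\mu(t)$ is genuinely time-varying and the divergence must be evaluated pointwise along $\mu(t)$ rather than frozen at the equilibrium value $M^{*}$.

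The main work is the jump factor $\kappa_1$. Since $\varphi$ depends only on $I_b$, we have $\partial\varphi/\partial M=0$ and $\partial\varphi/\partial I_b=1$; since $\xi,\eta$ are linear, $\partial\xi/\partial M=-p$, $\partial\xi/\partial I_b=0$, $\partial\eta/\partial M=0$, $\partial\eta/\partial I_b=-q$. Substituting these into the $\kappa_j$-formula of Lemma~\ref{lemma2.1} collapses the numerator so that the $f_+$-term drops out and only a multiple of $g_+$ survives, while the denominator reduces to $Q$ evaluated at the pre-impact point. I would then evaluate $f,g$ at the pre-impact point $(\mu(T),H_b)\in\Gamma$ and $f_+,g_+$ at the post-impact point $((1-p)\mu(T),(1-q)H_b)\in\Omega$; expanding $Q$ at these two points and clearing the common factor $1/N_b$ yields the closed form of $\kappa_1$ quoted in the theorem. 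The genuinely delicate point is the bookkeeping of the factors $1-p$ and $1-q$ inside $g_+$ versus $g$, since this is precisely where a spurious or missing factor can slip in; I would therefore recompute the surviving bracket $(\partial\xi/\partial M+1)g_+=(1-p)g_+$ explicitly before simplifying.

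Finally I would assemble $\mu=\kappa_1\exp\{\int_0^T(\partial P/\partial M+\partial Q/\partial I_b)\,\mathrm{d}t\}$ and invoke the hypothesis $|\mu|<1$, after which Lemma~\ref{lemma2.1} delivers orbital asymptotic stability of $(\mu(t),\nu(t))$. I expect the hardest part to be the algebraic—but error-prone—reduction of $\kappa_1$ together with the correct pointwise treatment of the divergence integral along the non-constant $M=\mu(t)$; by contrast, locating the single impact time and reading off the pre- and post-impact abscissae $\mu(T)$ and $(1-p)\mu(T)$ at the levels $H_b$ and $(1-q)H_b$ is routine once Theorem~\ref{Theorem-3} is in hand.
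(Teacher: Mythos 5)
Your overall strategy is exactly the paper's: identify $(f,g,\xi,\eta,\varphi)$, compute the single jump factor and the divergence integral, and feed them into Lemma~\ref{lemma2.1}. The dictionary, the partial derivatives, and the reduction of the numerator of $\kappa_j$ to $(1+\partial\xi/\partial M)\,g_+=(1-p)g_+$ are all right. But there is a concrete gap at the final assembly. The Floquet multiplier delivered by Lemma~\ref{lemma2.1} is
\begin{equation*}
\mu=(1-p)\,\frac{Q\bigl((1-p)\mu(T),(1-q)H_b\bigr)}{Q\bigl(\mu(T),H_b\bigr)}\,
\exp\left\{\int_0^T\Bigl(\mu_m-\delta_m-\tfrac{2\mu_m}{K_m}\mu(t)-\mu_b-\tfrac{c\beta_{bm}}{N_b}\mu(t)\Bigr)\,\mathrm{d}t\right\}
=(1-p)\,\kappa_1\,\exp\left\{\int_0^T\mathrm{div}\,\mathrm{d}t\right\},
\end{equation*}
whereas you write $\mu=\kappa_1\exp\{\int_0^T\mathrm{div}\,\mathrm{d}t\}$ — you drop the factor $(1-p)$ that you yourself isolated one paragraph earlier. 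More importantly, neither of these expressions is literally the quantity $\left|\kappa_1\right|\exp\{-\int_0^T[\tfrac{\mu_m}{K_m}\mu(t)+\mu_b+\tfrac{c\beta_{bm}}{N_b}\mu(t)]\,\mathrm{d}t\}$ appearing in the hypothesis, so you cannot simply ``invoke $|\mu|<1$'': you must first prove they coincide.

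The missing ingredient is the identity obtained by integrating $\mathrm{d}M/M$ along the periodic orbit from the post-impact point $(1-p)\mu(T)$ back to the pre-impact point $\mu(T)$:
\begin{equation*}
\ln\frac{1}{1-p}=\int_{(1-p)\mu(T)}^{\mu(T)}\frac{\mathrm{d}M}{M}
=\int_0^T\Bigl[\mu_m-\delta_m-\frac{\mu_m}{K_m}\mu(t)\Bigr]\,\mathrm{d}t .
\end{equation*}
This single identity does two jobs at once: it cancels the prefactor $(1-p)$ against $\exp\{\int_0^T(\mu_m-\delta_m-\tfrac{\mu_m}{K_m}\mu(t))\,\mathrm{d}t\}$, and it converts the raw divergence integral into the reduced exponent $-\int_0^T[\tfrac{\mu_m}{K_m}\mu(t)+\mu_b+\tfrac{c\beta_{bm}}{N_b}\mu(t)]\,\mathrm{d}t$ of the theorem. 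Without it the stated sufficient condition cannot be matched to the Poincar\'e criterion, since the unknown period $T$ and the non-constant profile $\mu(t)$ leave $\int_0^T(\mu_m-\delta_m-\tfrac{\mu_m}{K_m}\mu(t))\,\mathrm{d}t$ otherwise uncomputable. Add this step (and restore the $(1-p)$) and your argument becomes the paper's proof.
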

\begin{proof}
Assuming that the order-1 periodic solution $(\mu(t),\nu(t))$ with period $T$ intersects the impulsive set $\Gamma$ and phase set $\Omega$ at points $M(\mu(T),H_b)$ and
$M^+((1-p)\mu(T),(1-q)H_b)$, respectively. Comparing with system (\ref{System-3}), we have
\begin{equation}\label{Equation-04}
P(M, I_b)=\mu_m M(1-{M}/{K_m})-\delta_m M, \quad Q(M, I_b)=c\beta_{bm}(1-{ I_b}/{N_b})M-\mu_{b}I_{b},
\end{equation}
\begin{equation}\label{Equation-04}
\xi(M,I_b)=-pM,\quad \eta(M,I_b)=-qI_b,\quad\varphi(M,I_b)=I_b-H_b,\\
\end{equation}
and $$(\mu(T),\nu(T))=(\mu(T),H_b)\quad (\mu(T^+),\nu(T^+))=((1-p)\mu(T),(1-q)H_b).
$$
Thus
\begin{equation}\label{Equation-4}
\frac{\partial P}{\partial M}=\mu_m-\delta_m-2M\frac{\mu_m}{K_m},\quad
\frac{\partial Q}{\partial I_b}=-\mu_b-c\beta_{bm}\frac{M}{N_b},
\end{equation}
and
\begin{equation}\label{Equation-5}
\frac{\partial\xi}{\partial M}=-p,\quad
\frac{\partial\eta}{\partial I_b}=-q,\quad
\frac{\partial\varphi}{\partial I_b}=1,\quad
\frac{\partial\xi}{\partial I_b}
=\frac{\partial\eta}{\partial M}
=\frac{\partial\varphi}{\partial M}=0.
\end{equation}
Furthermore, it follows from (\ref{Equation-4}) and (\ref{Equation-5}) that
\begin{equation}\label{Equation-6}
\begin{aligned}
\kappa=&\frac{\left(\frac{\partial\eta}{\partial
I_b}\frac{\partial\varphi}{\partial M}-\frac{\partial\eta}{\partial
M}\frac{\partial\varphi}{\partial I_b}+\frac{\partial\varphi}{\partial
M}\right)P_++\left(\frac{\partial\xi}{\partial
M}\frac{\partial\varphi}{\partial I_b}-\frac{\partial\xi}{\partial
I_b}\frac{\partial\varphi}{\partial M}+\frac{\partial\varphi}{\partial
I_b}\right)Q_+}{\frac{\partial\varphi}{\partial
M}P+\frac{\partial\varphi}{\partial I_b}Q} \\
=&\frac{(1-p)Q(\mu(T^+),\nu(T^+))}{Q(\mu(T),\nu(T))}\\
=&(1-p)\frac{c\beta_{bm}[N_b-(1-q)H_b](1-p)\mu(T)
-\mu_b(1-q)H_bN_b}{c\beta_{bm}(N_b-H_b)\mu(T)-\mu_bH_bN_b}
\end{aligned}
\end{equation}
and
\begin{equation}\label{Equation-7}
\mu=\kappa\exp\left\{\int_0^T
\left(\mu_m-\delta_m-2\frac{\mu_m}{K_m}\mu(t)
-\mu_b-\frac{c\beta_{bm}}{N_b}\mu(t)\right)\,\mathrm{d} t\right\}.
\end{equation}

On the other hand, integrating the both sides of the first
equation of model (\ref{System-3}) along the orbit
$\widehat{M^+M}$ to give
\begin{equation}\label{Equation-8}
\ln\frac{1}{1-p}=\int_{(1-p)\mu(T)}^{\mu(T)}\frac{\mathrm{d}M}{M}=\int_0^T
[\mu_m-\delta_m-\frac{\mu_m}{K_m}\mu(t)]\,\mathrm{d}t.
\end{equation}
From (\ref{Equation-6})-(\ref{Equation-8}), we obtain
$$
\begin{aligned}
|\mu|=&
\frac{1}{1-p} \left |(1-p)\frac{c\beta_{bm}[N_b-(1-q)H_b](1-p)\mu(T)
-\mu_b(1-q)H_bN_b}{c\beta_{bm}(N_b-H_b)\mu(T)-\mu_bH_bN_b}\right|
\\
&\times \exp\left\{-\int_0^T\left(\frac{\mu_m}{K_m}\mu(t)
+\mu_b+\frac{c\beta_{bm}}{N_b}\mu(t)\right)\,\mathrm{d}t\right\}\\
=&\left|\frac{c\beta_{bm}[N_b-(1-q)H_b](1-p)\mu(T)
-\mu_b(1-q)H_bN_b}{c\beta_{bm}(N_b-H_b)\mu(T)-\mu_bH_bN_b}\right|\\
&\times\exp\left\{-\int_0^T\left(\frac{\mu_m}{K_m}\mu(t)
+\mu_b+\frac{c\beta_{bm}}{N_b}\mu(t)\right)\,\mathrm{d}t\right\}.
\end{aligned}
$$

From what has been discussed above, we know that model \eqref{System-2}
satisfies all conditions of Lemma \ref{lemma2.1}. It then follows
from Lemma \ref{lemma2.1} that the order-1 periodic solution
$(\mu(t),\nu(t))$ of model \eqref{System-2} is orbitally
asymptotically stable and has asymptotic phase property. This
completes the proof.
\end{proof}

\begin{remark}
Generally, the condition of Theorem \ref{Theorem-2} is not
easy to test since we cannot compute explicitly the analytic expressions of
the positive order-1 periodic solution $(\mu(t),\nu(t))$. However, we shall give in what follows two broad situations which correspond to the following cases.
\end{remark}

\begin{corollary}\label{Corollary-1}
If
$$
k_1=\left|\frac{c\beta_{bm}[N_b-(1-q)H_b](1-p)\mu(T)
-\mu_b(1-q)H_bN_b}{c\beta_{bm}(N_b-H_b)\mu(T)-\mu_bH_bN_b}\right|<1,
$$
then order-1 periodic solution $(\mu(t),\nu(t))$ of model \eqref{System-2} is orbitally asymptotically stable.
\end{corollary}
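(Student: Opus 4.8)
The plan is to read the conclusion off directly from Theorem \ref{Theorem-4}, since the quantity $k_1$ appearing in the corollary is literally the factor $|\kappa_1|$ that enters the Floquet multiplier computed there. Recall that the proof of Theorem \ref{Theorem-4} established the exact identity
$$
|\mu|=|\kappa_1|\exp\left\{-\int_0^T\left(\frac{\mu_m}{K_m}\mu(t)+\mu_b+\frac{c\beta_{bm}}{N_b}\mu(t)\right)\,\mathrm{d}t\right\},
$$
in which $\kappa_1$ is the same rational expression that now defines $k_1$. Writing $E$ for the exponential factor, we have $|\mu|=k_1\cdot E$, and the whole task reduces to controlling $E$.

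The single substantive observation is that $E<1$ always. Indeed, along the positive order-1 periodic solution we have $\mu(t)>0$ for all $t$ (positivity being guaranteed by Theorem \ref{Theorem-2}), and every parameter $\mu_m, K_m, \mu_b, c, \beta_{bm}, N_b$ is positive. Hence the trace integrand $\frac{\mu_m}{K_m}\mu(t)+\mu_b+\frac{c\beta_{bm}}{N_b}\mu(t)$ is strictly positive on $[0,T]$, so its integral is positive and $E=\exp\{-(\text{positive})\}\in(0,1)$.

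Combining the two, the hypothesis $k_1<1$ forces $|\mu|=k_1 E\leq k_1<1$ (the case $k_1=0$ giving $|\mu|=0<1$ trivially). The analogue of the Poincar\'e criterion, Lemma \ref{lemma2.1}, which was already verified in the proof of Theorem \ref{Theorem-4}, then yields that $(\mu(t),\nu(t))$ is orbitally asymptotically stable, which is precisely the assertion of the corollary.

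I expect no real obstacle here: the corollary is simply a strengthening-by-dropping-a-factor of Theorem \ref{Theorem-4}. The only point requiring any care, and it is minor, is confirming that the exponential factor never exceeds $1$, i.e. that the trace integrand stays nonnegative; this is immediate from the positivity of the periodic orbit and of the model parameters. I would therefore keep the argument to the few lines above rather than reproducing the Floquet-multiplier calculation, which is already carried out in the proof of Theorem \ref{Theorem-4}.
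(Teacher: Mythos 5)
Your argument is correct and is exactly the deduction the paper intends: the corollary follows from Theorem \ref{Theorem-4} because the exponential factor $E=\exp\{-\int_0^T(\frac{\mu_m}{K_m}\mu(t)+\mu_b+\frac{c\beta_{bm}}{N_b}\mu(t))\,\mathrm{d}t\}$ lies in $(0,1)$ by positivity of $\mu(t)$ and of the parameters, so $k_1<1$ forces $|\mu|=k_1E<1$ and Lemma \ref{lemma2.1} applies. The paper states the corollary without proof, and your few lines supply precisely the missing (and only needed) observation.
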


Finally, considering the position relation between isoclinics and impulsive $\Gamma$ or phase set $\Omega$, we state and prove our result on the existence and stability of positive order-$k(k=1,2)$ periodic solutions of
model \eqref{System-2} in different cases.

\begin{itemize}
\item The case of $(1-p)M^*<N_{mq}$
\end{itemize}

\begin{theorem}\label{Theorem-5}
If $(1-p)M^*<N_{mq}$, then model \eqref{System-2} has a uniquely positive order-1 or order-2 periodic solution, which is orbitally asymptotically stable. Further, there is no order-$k(k\geq3)$ periodic solution.
\end{theorem}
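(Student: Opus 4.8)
The plan is to analyze the one-dimensional Poincar\'e map $\digamma$ on the phase set $\Omega$ and to show that, under $(1-p)M^*<N_{mq}$, it is a continuous and strictly \emph{decreasing} self-map of a compact interval. Once this is established, the assertion follows from the elementary theory of monotone decreasing interval maps: such a map has a unique fixed point and no periodic point of period $\ge 3$, so the only possible periodic solutions of \eqref{System-2} are of order $1$ or order $2$.

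First I would use that the $M$-equation in \eqref{System-2} is decoupled from $I_b$ and is a scalar logistic equation; since $\mu_m>\delta_m$, its solutions increase (resp.\ decrease) monotonically to $M^*$ when started below (resp.\ above) $M^*$. Hence any orbit meets $\Gamma$ with $M\le M^*$, so every impulsive image has $M$-coordinate at most $(1-p)M^*$, and a standard trapping argument (values above $M^*$ are contracted by a factor less than $1-p$ at each impulse) confines every periodic orbit to the compact interval $I:=[(1-p)N_{mh},(1-p)M^*]$ on $\Omega$. This is exactly where the hypothesis enters: $(1-p)M^*<N_{mq}$ guarantees that every point of $I$ lies to the left of the horizontal isocline, i.e.\ in the region $\Omega_1$ of \eqref{Domain} where $\mathrm{d}I_b/\mathrm{d}t<0$.

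Next, since $M$ is strictly increasing along each such orbit until it reaches $\Gamma$ (never attaining $M^*$), I would reparametrize the orbit by $M$, writing it as a graph $I_b=\Psi(M)$ solving $\mathrm{d}I_b/\mathrm{d}M=Q(M,I_b)/P(M,I_b)$. Starting in $\Omega_1$ each $\Psi$ first decreases, crosses the isocline exactly once (the flow on the isocline points strictly into $\Omega_2$, so no re-entry occurs), then increases up to $H_b$; crucially, its up-crossing of the level $I_b=(1-q)H_b$ occurs at an $M$-value larger than $N_{mq}$. Taking $a_1<a_2<N_{mq}$, this forces $\Psi_1(a_2)<(1-q)H_b=\Psi_2(a_2)$, and since distinct solutions of the scalar equation cannot cross, $\Psi_1<\Psi_2$ for all larger $M$; as both rise to $H_b$, the right one reaches it first, giving $f(a_2)<f(a_1)$. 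Thus $f$ and $\digamma=(1-p)f$ are strictly decreasing and map $I$ into itself. I expect this monotonicity to be the main obstacle---the direct time-map estimate $f(a)=\phi(\tau(a);a)$, with $\phi$ the logistic flow and $\tau$ the first-hitting time, pits the increasing dependence on the initial value against the decreasing $\tau(a)$ and is inconclusive, so the graph/comparison argument above (which is what makes the hypothesis do its work) is essential.

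Finally, with $\digamma$ continuous and strictly decreasing on $I$, the map $\digamma^2$ is strictly increasing, whence every periodic point has period $1$ or $2$ and no order-$k$ ($k\ge 3$) solution exists. Existence of a fixed point is Theorem \ref{Theorem-3}, and strict monotonicity makes it unique, giving a unique order-1 solution; its stability is read off from the multiplier of Lemma \ref{lemma2.1}, equivalently from $\digamma'$ at the fixed point via Corollary \ref{Corollary-1}. If $|\digamma'|<1$ there, the order-1 solution is the unique orbitally asymptotically stable periodic solution; if $|\digamma'|>1$, the fixed point repels and the increasing map $\digamma^2$ produces a two-cycle, i.e.\ an orbitally asymptotically stable order-2 solution, whose stability is again certified by Lemma \ref{lemma2.1}. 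In either case exactly one orbitally asymptotically stable periodic solution of order $1$ or $2$ exists. The last point requiring care is to exclude several coexisting two-cycles; I would obtain this from the strict monotonicity (and, if necessary, a convexity estimate) of $\digamma^2$ on $I$, which pins down a single attracting cycle.
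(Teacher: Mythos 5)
Your proposal is correct in substance and, at the top level, follows the same strategy as the paper: both arguments reduce the theorem to the behaviour of the one-dimensional Poincar\'e map $\digamma$ on the phase set and then read off the order-1/order-2 dichotomy from the interleaving of the even- and odd-indexed impulsive points. The genuine difference is where the hypothesis $(1-p)M^*<N_{mq}$ does its work. The paper simply asserts, in its four cases (a)(i)--(b)(iv), that the orderings of $\widetilde{A}_0^+,\widetilde{A}_1^+,\widetilde{A}_2^+$ propagate to the whole sequence (even terms monotone one way, odd terms the other); this propagation is exactly the statement that $\digamma$ is strictly decreasing and $\digamma^2$ strictly increasing on the trapping interval, which the paper never proves. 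You make this the centrepiece: the hypothesis forces every impulsive image to land in $\Omega_1$ (left of $H_q$), and your non-crossing/comparison argument for the scalar equation $\mathrm{d}I_b/\mathrm{d}M=Q/P$ --- using that an up-crossing of the level $I_b=(1-q)H_b$ can only occur at $M>N_{mq}$ --- is a correct and complete proof that $f$, hence $\digamma$, is strictly decreasing there. This is the missing lemma that makes the paper's case analysis legitimate, and it also gives the cleanest route to ``no order-$k$ for $k\ge 3$'' (periodic points of an increasing interval map $\digamma^2$ are fixed points). One caveat applies equally to both proofs: strict monotonicity of $\digamma^2$ alone does not yield uniqueness of the two-cycle, nor does it yield orbital asymptotic stability without a multiplier condition of the type in Lemma \ref{lemma2.1} or Corollary \ref{Corollary-1} (an increasing map can have many fixed points, and the unique fixed point of a decreasing map can repel). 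The paper glosses over this by asserting convergence of the subsequences; you at least flag it and indicate that a derivative or convexity estimate on $\digamma^2$ would be needed to close it. So your write-up is, if anything, more rigorous than the paper's on the parts it completes, and honest about the step that neither argument fully settles.
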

\begin{proof}
Let $A_0^+(N_{m0}^+, I_{b0}^+) \in \Omega$ in domain $I$. According to the geometrical structure of the phase field of model \eqref{System-2}, trajectory $\Pi^+(A_0^+, t_0)$ initiating from point $A_0^+$ turns around point $H_q$ and reach impulsive set $\Gamma$ at point $A_1(N_{m1}, I_{b1})$, and then map to phase set $\Omega$ at point $A_1^+(N_{m1}^+, I_{b1}^+)$ after one time impulsive effect. Repeating the above process, we have $N_{m(n+1)}^+ =\digamma(N_{mn}^+ ), n=1,2,3\cdots.$ Based on the condition $(1-p)M^* < N_{mq}$, we have point $A_1^+$
is left of point $H_q$. If $\widetilde{A}_0^+ =\widetilde{A}_1^+$, then model \eqref{System-2} exists a
positive order-1 periodic solution. And if $\widetilde{A}_0^+\neq \widetilde{A}_1^+$, and $\widetilde{A}_0^+ = \widetilde{A}_2^+$, then model \eqref{System-2} has a positive order-2 periodic solution.

 Next, we discuss the general circumstance, that is $\widetilde{A}_0^+ \neq \widetilde{A}_2^+ \neq\cdots\neq \widetilde{A}_n^+ (n>2).$ For the position relation of $\widetilde{A}_0^+$, $\widetilde{A}_1^+$ and $\widetilde{A}_2^+$, it has the following possible situations.
\item[(a).] If $\widetilde{A}_0^+ < \widetilde{A}_1^+$, there are two possible cases that are $\widetilde{A}_0^+ < \widetilde{A}_2^+< \widetilde{A}_1^+$ and $\widetilde{A}_2^+ < \widetilde{A}_0^+< \widetilde{A}_1^+.$ \\
$(i).$ If $\widetilde{A}_0^+ < \widetilde{A}_2^+< \widetilde{A}_1^+$, we have
$$
0<\widetilde{A}_0^+ < \widetilde{A}_2^+< \widetilde{A}_4^+ < \cdots<\widetilde{A}_{2k}^+ <\cdots < \widetilde{A}_{2k+1}^+< \cdots< \widetilde{A}_3^+< \widetilde{A}_1^+ < H_q,
$$
That is
$$
0<N_{m0}^+<N_{m2}^+<N_{m4}^+<\cdots<N_{m(2k)}^+< \cdots<N_{m(2k+1)}^+<\cdots<N_{m3}^+<N_{m1}^+<N_{mq}.
$$

$(ii).$ If $\widetilde{A}_2^+ < \widetilde{A}_0^+< \widetilde{A}_1^+$, we yield
$$
0<\cdots < \widetilde{A}_{2k}^+ < \cdots <\widetilde{A}_4^+ <\widetilde{A}_2^+ < \widetilde{A}_0^+< \widetilde{A}_1^+ < \widetilde{A}_3^+ <\cdots\widetilde{A}_{2k+1}^+<\cdots<H_q,
$$
That is
$$
0<\cdots<N_{m(2k)}^+<\cdots<N_{m4}^+<N_{m2}^+<N_{m0}^+<N_{m1}^+<N_{m3}^+
<\cdots<N_{m(2k+1)}^+<\cdots<N_{mq}.
$$
\item[(b).] If $\widetilde{A}_1^+ < \widetilde{A}_0^+$, there are two possible cases $\widetilde{A}_1^+ < \widetilde{A}_0^+< \widetilde{A}_2^+$ and $\widetilde{A}_1^+ < \widetilde{A}_2^+< \widetilde{A}_0^+.$\\
$(iii).$ If $\widetilde{A}_1^+ < \widetilde{A}_0^+ < \widetilde{A}_2^+$, we get
$$
0<\cdots < \widetilde{A}_{2k+1}^+< \cdots < \widetilde{A}_3^+ <\widetilde{A}_1^+ < \widetilde{A}_0^+< \widetilde{A}_2^+< \widetilde{A}_4^+<\cdots\widetilde{A}_{2k}^+<\cdots<H_q,
$$
That is
$$
0<\cdots<N_{m(2k+1)}^+<\cdots<N_{m3}^+<N_{m1}^+<N_{m0}^+<N_{m2}^+<N_{m4}^+<\cdots<N_{m(2k)}^+
<\cdots<N_{mq}.
$$
$(iv).$ If $\widetilde{A}_1^+ < \widetilde{A}_2^+ < \widetilde{A}_0^+$, we have
$$0<\widetilde{A}_1^+ < \widetilde{A}_3^+ < \cdots< \widetilde{A}_{2k+1}^+ < \cdots < \widetilde{A}_{2k}^+ < \cdots< \widetilde{A}_4^+ < \widetilde{A}_2^+ <\widetilde{A}_0^+ <H_q,$$
That is,
$$
0<N_{m1}^+<N_{m3}^+<\cdots<N_{m(2k+1)}^+< \cdots<N_{m(2k)}^+<\cdots<N_{m4}^+<N_{m2}^+<N_{m0}^+<N_{mq}.
$$
\begin{figure}[htb]
\centering
\subfiguretopcaptrue
\subfigure[]{
\label{Fig.1(a)}
\includegraphics[width=0.46\textwidth]{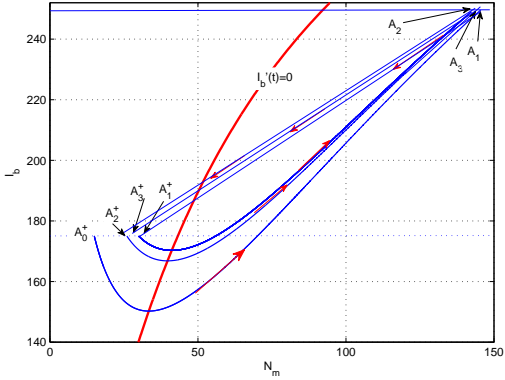}}
\subfigure[]{
\label{Fig.1(b)}
\includegraphics[width=0.46\textwidth]{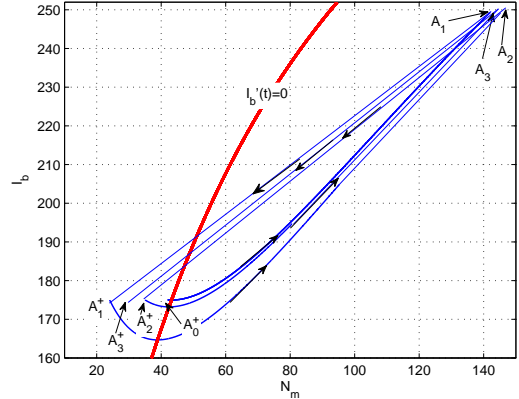}}
\caption{ The illustration of existence of order-1 periodic solution of model  \eqref{System-2}.}
\label{Fig.1}
\end{figure}
On the basis of the above analysis, in $(ii)$ of case $(a)$, it follows that $\lim _{k\rightarrow \infty}N_{m(2k)}^+=N_2^*$ and $\lim _{k\rightarrow \infty}N_{m(2k+1)}^+=N_1^*$, where $0<N_2^*<N_1^*\leq N_{mq}$. And then, we have $N_1^*=\digamma(N_2^*)$, $N_2^*=\digamma(N_1^*)$. Therefore, model \eqref{System-2} has an orbitally asymptotically stable positive order-2 periodic solution. Similarly, in $(iii)$ of case (b), model \eqref{System-2} has an orbitally asymptotically stable positive order-2 periodic solution. For $(i)$ of case (a) and $(iv)$ of case (b), there exists a $N^*\in(0, Mq)$ such that $\lim _{k\rightarrow \infty}N_{m(2k)}^+=\lim _{k\rightarrow \infty}N_{m(2k+1)}^+=N^*$. This conclusion implies that there exists an order-1 periodic solution of model \eqref{System-2}. This proof is complete.
\end{proof}

\begin{itemize}
\item The case of $(1-p)N_{mh}>N_{mq}$
\end{itemize}

\begin{theorem}\label{Theorem-6}
If $(1-p)N_{mh}>N_{mq}$, then model \eqref{System-2} exists a uniquely positive order-1 periodic solution, which is orbitally asymptotically stable. Further, there is no order-$k(k\geq2)$ periodic solution.
\end{theorem}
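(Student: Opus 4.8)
The plan is to analyse the Poincar\'e map $\digamma$ of \eqref{equation-1} on the phase set $\Omega$, exploiting the fact that the $M$-component of \eqref{System-2} obeys the decoupled logistic equation $\mathrm{d}M/\mathrm{d}t=M[(\mu_m-\delta_m)-\mu_m M/K_m]$, so the line $M=M^*$ is invariant and, for $0<M<M^*$, $M(t)$ increases monotonically toward $M^*$. First I would record the geometric meaning of the hypothesis. A trajectory crosses $\Gamma$ upward only where $I_b'(t)>0$, i.e. at an $M$-coordinate exceeding $N_{mh}$, so after one impulse its image on $\Omega$ has $M$-coordinate larger than $(1-p)N_{mh}$; recall $N_{mq}<N_{mh}<M^*$ since the horizontal isocline is increasing and $H_b<I_b^*$. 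Under $(1-p)N_{mh}>N_{mq}$ every impulsive point therefore lies strictly to the right of $H_q$, hence in $\Omega_2$ where $M'>0$ and $I_b'>0$, so from any such point the trajectory rises directly to $\Gamma$ without the downward winding responsible for the order-2 possibility in Theorem \ref{Theorem-5}; consequently the interval $((1-p)N_{mh},(1-p)M^*)$ is invariant under $\digamma$.

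Second, I would prove that $\digamma$ is strictly increasing on this interval. Parametrising an orbit by the monotone variable $M$, two distinct orbits issuing from $\Omega$ cannot cross (uniqueness for \eqref{System-4}) and both have increasing $M$, so the one starting further to the right meets $\Gamma$ at a larger $M$-coordinate; thus $f$, and hence $\digamma=(1-p)f$, is monotonically increasing. A continuous increasing map has no periodic point of period $\ge 2$: if $x$ were the least point of a $k$-cycle with $k\ge2$, then $\digamma(x)>x$ forces $\digamma^{k}(x)>x$, a contradiction. This already yields the non-existence of order-$k$ ($k\ge2$) periodic solutions.

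Third, existence of an order-1 periodic solution is Theorem \ref{Theorem-3}, which supplies a fixed point of $\digamma$. For uniqueness and stability I would combine monotonicity with the Floquet criterion. At any fixed point $N^*$ with hitting coordinate $\mu(T)$ and $N^*=(1-p)\mu(T)$, the multiplier of Lemma \ref{lemma2.1} equals $\digamma'(N^*)$; substituting $\mu_bH_bN_b=c\beta_{bm}(N_b-H_b)N_{mh}$ and $\mu_b(1-q)H_bN_b=c\beta_{bm}(N_b-(1-q)H_b)N_{mq}$ into $\kappa_1$ gives $\kappa_1=\frac{N_b-(1-q)H_b}{N_b-H_b}\cdot\frac{N^*-N_{mq}}{\mu(T)-N_{mh}}>0$, both factors being positive because $\mu(T)>N_{mh}$ and $N^*>(1-p)N_{mh}>N_{mq}$. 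Multiplying by the strictly decaying exponential $\exp\{-\int_0^T[\mu_m\mu(t)/K_m+\mu_b+c\beta_{bm}\mu(t)/N_b]\,\mathrm{d}t\}$ yields $0<\digamma'(N^*)<1$. Hence $G(x):=\digamma(x)-x$ is strictly decreasing at each of its zeros, so it has exactly one zero: the order-1 solution is unique, and since $|\mu|<1$ it is orbitally asymptotically stable by Lemma \ref{lemma2.1} and Corollary \ref{Corollary-1}.

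The hard part will be the rigorous justification that $(1-p)N_{mh}>N_{mq}$ truly excludes the oscillatory regime, i.e. that no impulsive image can fall to the left of $H_q$; this is precisely the mechanism that produced the order-2 branch in Theorem \ref{Theorem-5}, and it is the decoupling of the $M$-equation that makes both this exclusion and the monotonicity of $\digamma$ tractable. A secondary technical point is the sign estimate giving $|\mu|<1$: the factor $k_1$ of Corollary \ref{Corollary-1} need not itself be below $1$, so one must retain the decaying exponential term $\exp\{-\int_0^T[\mu_m\mu(t)/K_m+\mu_b+c\beta_{bm}\mu(t)/N_b]\,\mathrm{d}t\}$ to close the argument and secure $0<\digamma'(N^*)<1$ at every fixed point.
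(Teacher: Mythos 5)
Your geometric setup and your argument excluding order-$k$ ($k\ge2$) solutions are sound, and in fact cleaner than the paper's: the observation that under $(1-p)N_{mh}>N_{mq}$ every impulsive point lands strictly to the right of $H_q$, so that orbits rise monotonically in $I_b$ to $\Gamma$ and the transition map $f$ (hence $\digamma=(1-p)f$) is strictly increasing by non-crossing of orbits, does rule out periodic points of period $\ge 2$. The paper proceeds differently: it establishes existence by the same intermediate-value argument as Theorem \ref{Theorem-3}, and then obtains uniqueness, attractivity and the absence of higher-order periodic solutions all at once by iterating the two monotone sequences of impulsive points generated from $B_1^+$ and $C_1^+$ and squeezing the nested intervals $\overline{B_n^+C_n^+}$ down to a single point, with no use of Floquet multipliers.

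The genuine gap is in your uniqueness/stability step. You reduce everything to the claim $0<\digamma'(N^*)<1$ at each fixed point; the identification of $\digamma'(N^*)$ with the multiplier $\mu$ of Lemma \ref{lemma2.1} is standard but itself needs justification, and, more seriously, you establish only the lower bound. Your computation $\kappa_1=\frac{N_b-(1-q)H_b}{N_b-H_b}\cdot\frac{N^*-N_{mq}}{\mu(T)-N_{mh}}>0$ is correct, but $\kappa_1$ can well exceed $1$ (the first quotient always does, and the second blows up as $\mu(T)$ approaches $N_{mh}$), and you supply no lower bound on $\int_0^T\bigl[\mu_m\mu(t)/K_m+\mu_b+c\beta_{bm}\mu(t)/N_b\bigr]\mathrm{d}t$ that would force the product below $1$; ``retaining the decaying exponential'' is not an estimate. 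Since both the uniqueness of the fixed point (via $G(x)=\digamma(x)-x$ decreasing at each zero) and the orbital asymptotic stability rest on this unproved inequality, the proof is incomplete: an increasing $\digamma$ with the correct boundary signs can still have several fixed points. To close the argument you must either actually prove $|\mu|<1$ under the hypothesis $(1-p)N_{mh}>N_{mq}$, or replace this step by a direct iteration/contraction argument on the impulsive-point sequences as the paper does.
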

\begin{proof}
According to the geometrical structure of the phase field of model \eqref{System-2}, trajectory $\Pi^+(B_1^+, t_0)$ from point $B_1^+(N_{m1}^+, (1-q)H_b)\in\Omega$ in domain $I$ turns around point $H_q$, intersects impulsive set $\Gamma$ at point $B_2(N_{m2}, H_b)$ and then map to point $B_2^+(N_{m2}^+, (1-q)H_b)$, where $N_{m2}^+ =(1-p)N_{m2}=\digamma(N_{m1}^+)$, by Poincar$\acute{e}$ map \eqref{equation-1}. Based on the condition $(1-p)N_{mh}>N_{mq}$, we yield that point $B_2^+$ is right of the point $H_q$. Further, $N_{m2}^+ - N_{m1}^+ =\digamma(N_{m1}^+)-N_{m1}^+ >0$.

On the other hand, the trajectory $\Pi^+(C_1^+, t_0)$ starting from point $C_1^+(M^*,(1-q)H_b)$ reaches point $C_2(M^*, H_b)$, and then jumps to point $C_2^+(N_{2m}^+, (1-q)H_b)\in\Omega$, where $N_{2m}^+ =(1-p)M^*\in(N_{mq}, M^*)$. Obviously, $N_{2m}^+ - M^*=\digamma(M^*)-M^*<0$, Therefore, there must exist a point $D_1^+(N_{dm}, (1-q)H_b)\in \Omega$ between $B_1^+$ and $C_1^+$ such that $N_{dm}^+ - N_{dm}=\digamma(N_{dm}) - N_{dm}=0$. That is to say, the trajectory $\Pi^+(D_1^+, t_0)$ starting from the point $D_1^+(N_{md}^+, (1-q)H_b)$ exists an order-1 periodic solution of model \eqref{System-2}.

Since trajectory starting from points in sets $\{(M, I_b): 0<M<N_{m1}^+, I_b=(1-q)H_b\}$ and $\{(M, I_b): M>M^*, I_b=(1-q)H_b\}$ will enter set $\overline{B_1^+C_1^+}=\{(M, I_b): N_{m1}^+ < M < M^*, I_b=(1-q)H_b\}$ expericing impulsive effects after several times at most. Therefore, the initial point of the order-1 periodic solution only lies in set $\overline{B_1^+C_1^+}$. The set $\overline{B_1^+C_1^+}=\{(M, I_b): N_{m1}^+ < M < M^*, I_b=(1-q)H_b\}$ is mapped to the set $\overline{B_2 C_2}=\{(M, I_b): N_{mh} < M < M^*, I_b=(1-q)H_b\}$ by the first and second equations of model (\ref{System-3}). Subsequently, the set $\overline{B_2 C_2}$ is mapped to the set $\overline{B_2^+ C_2^+}=\{(M, I_b): N_{m2}^+ < M < N_{2m}^+, I_b=(1-q)H_b\}$ by the third and fourth equations of model \eqref{System-2}. It is to know $V_{m1}^+ < N_{m2}^+ < N_{2m}^+ < M^*$, the line segments $\mid\overline{B_1^+C_1^+}\mid > \mid\overline{B_2^+C_2^+}\mid > \cdots$. Continuing the above process from the vector field of model \eqref{System-2}, it yields that
$$
\mid\overline{B_1^+C_1^+}\mid>\mid\overline{B_2^+C_2^+}\mid>\mid\overline{ B_3^+C_3^+}\mid>\cdots
$$
and
$$
0<N_{m1}^+ < N_{m2}^+ < N_{m3}^+ < \cdots < N_{3m}^+ < N_{2m}^+ <N_{1m}^+ <M^*
$$
Further, the sequence $\mid\overline{B_n C_n}\mid$ is monotonously  convergent and $\lim_{n\to\infty} \mid\overline{ B_n C_n}\mid=0$, which implies that there exists a unique point $D_1^+(N_{md}^+, (1-q)H_b)$ such  that $\digamma(N_{md}^+)=N_{dm}^+$ and $\digamma(N_{dm}^+)=N_{md}^+$. That is to say, $\lim_{n\rightarrow \infty} N_{mn}^+=\lim_{n\to\infty} N_{nm}^+ =N_{md}^+= N_{dm}^+$.

Next, we demonstrate that the uniquely positive order-1 periodic solution is orbitally asymptotically stable. For any point $G(N_G, (1-q)H_b)\in \Omega$, where
$$
N_G\in[0,N_{m1}^+]\cup[N_{mn}^+, N_{m(n+1)}^+]\cup[N_{(n+1)m}^+, N_{nm}^+]\cup[N_{1m}^+]\cup[M^*,\infty ), n=1, 2,\cdots.
$$
Without loss of generality, setting $N_G \in [N_{mn}^+, N_{m(n+1)}^+]$. The trajectory $O^+(G,t_0)$ starting from the initial point $G(N_G, (1-q)H_b)$ intersects section $\Gamma$, next immediately jumps to point $G_1(N_{G_1},(1-q)H_b)\in \Omega$, where $N_{G_1}\in [N_{m(n+1)}^+, N_{m(n+2)}^+]$ due to impulsive effects. Repeating the above process, we obtain a sequence $\{G_n(N_{G_n}, (1-q)H_b)\}$ in section $\Omega$, where $V_{G_n} \in [N_{G_{n+k}},N_{G_{n+k+1}}] (k=1,2,\ldots)$. So $\lim_{n\rightarrow \infty} N_{G_n}=\lim_{n\rightarrow \infty} N_{mn}^+=V_{md}^+$. Similarly to, if $N_G \in [N_{(n+1)m}^+, N_{nm}^+]$, we also can get $\lim_{n\rightarrow \infty} N_{nm}^+=N_{dm}^+$. Furthermore, the trajectory from any initial point $G\in \Omega$, eventually tends to be the positive order-1 periodic solution.

From the above discussion, we obtain that model \eqref{System-2} has an unique order-1 periodic solution which is orbitally asymptotically stable for $(1-p)N_{mh}>N_{mq}$. This completes the proof.

\end{proof}

\begin{figure}[htb]
\centering
\subfiguretopcaptrue
\subfigure[]{
\label{Fig.2(a)}
\includegraphics[width=0.60\textwidth]{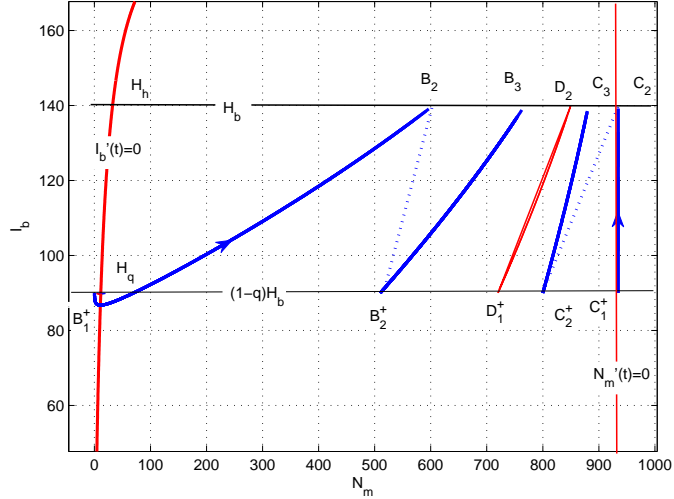}}
\caption{ The illustration of existence of order-1 periodic solution of model \eqref{System-2}.}
\label{Fig.2}
\end{figure}
\section{Numerical simulation and discussion}
In order to testify the validity of our results and the suitability of the impulsive state dependent pulse control strategy,
we consider the following WNV model with state dependent pulse control strategy.

\begin{figure}[htbp]
\centering
\subfiguretopcaptrue
\subfigure[]{
\label{Fig.3(a)}
\includegraphics[width=0.46\textwidth,height=0.24\textheight]{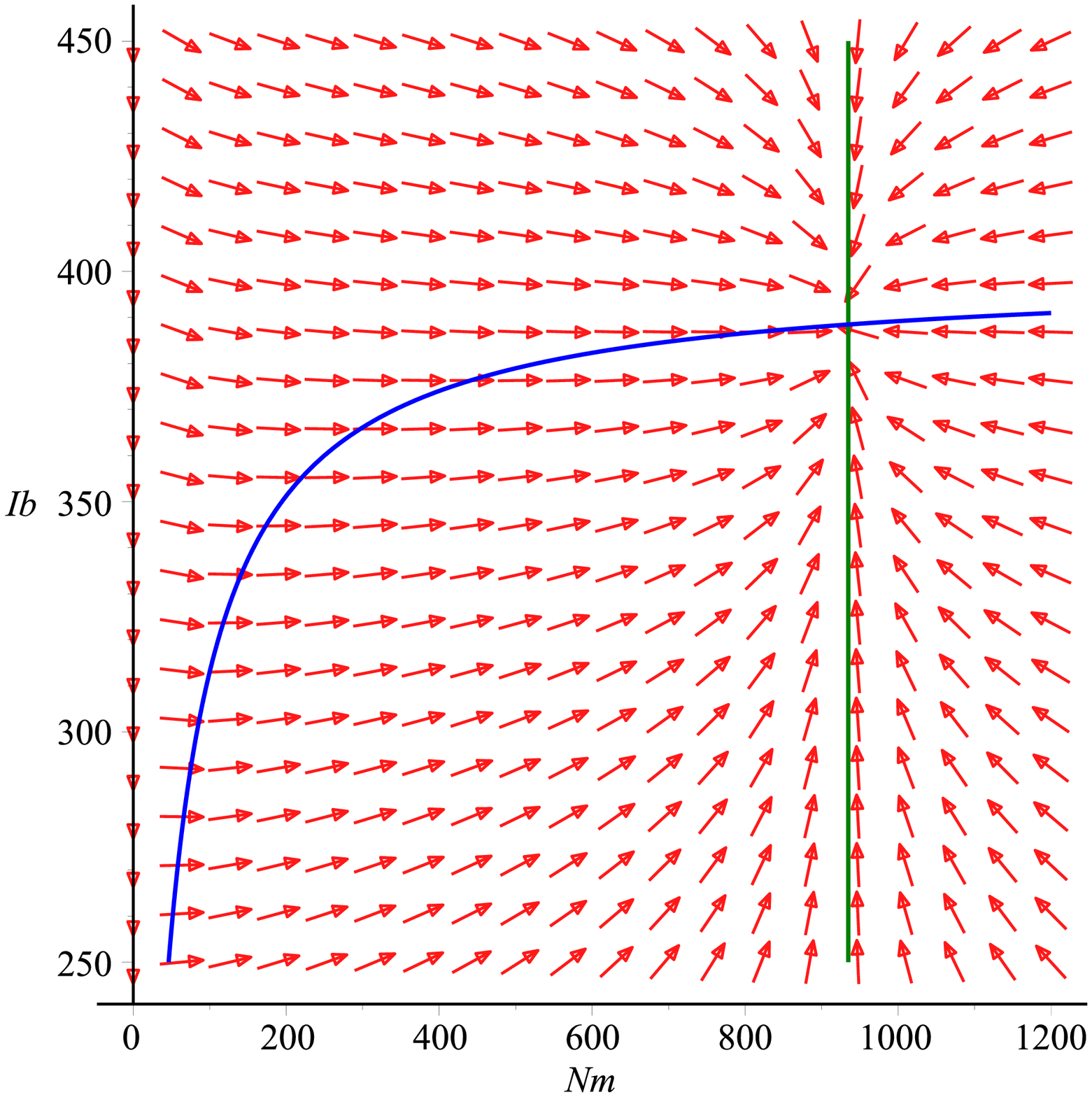}}
\subfigure[]{
\label{Fig.3(b)}
\includegraphics[width=0.46\textwidth]{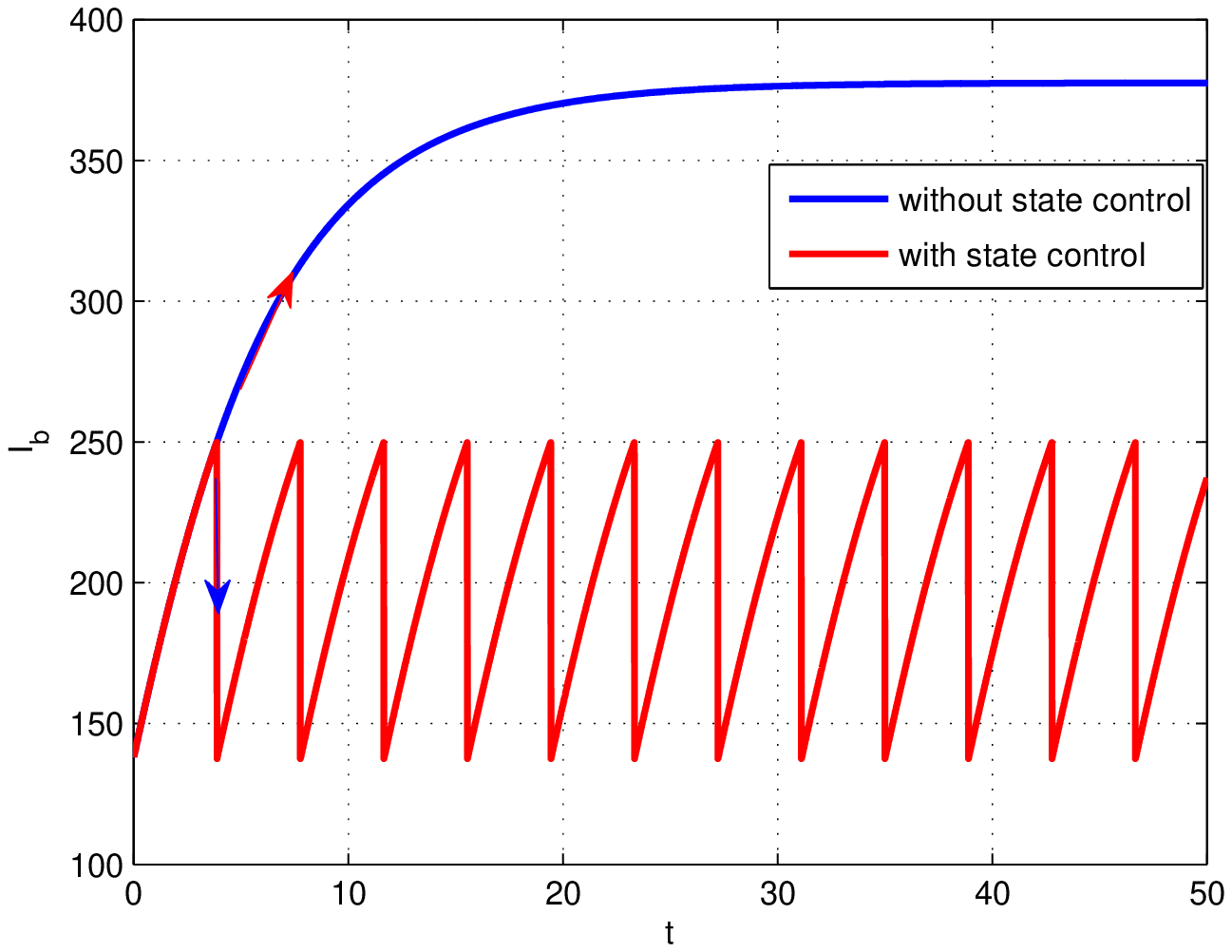}}
\caption{(a)Vector graph of model \eqref{System-2} with $\mu_m=0.537 > \delta_m=0.035$, $K_m=1000$, $c=0.09$, $\beta_{bm}=0.8$, $N_b=400$ and $\mu_b=0.01$; (b) There is a comparison of time series in $I_b(t)$ of model \eqref{System-2} between without state feedback control and state feedback control with $\mu_m=0.537$, $\delta_m=0.035$, $K_m=1000$, $c=0.09$, $\beta_{bm}=0.8$, $N_b=400$, $\mu_b=0.01$, $H_b=250<400$, $p=0.15$, $q=0.45$ and the initial value $N_{m0}=771$, $I_{b0}=137$.}
\label{Fig.1}
\end{figure}
Firstly, model \eqref{System-2} without pulse effects has a unique globally asymptotically stable endemic equilibrium point $E^*(V_m^*, I_b^*)$ =(934.23, 398.81), which is illustrated as Figure \ref{Fig.3(a)} by red line. Green line represents vertical isocline $M'(t)=0$, blue line represents horizontal isocline $I_b'(t)=0$. It is an obvious comparison that the time series in  of model \eqref{System-2} without state feedback control trends a unique globally asymptotically stable endemic equilibrium $E^*$. However, model \eqref{System-2} with state feedback control trends a stable state. That is to say, the number of the infected birds and mosquitoes is within a certain range by employing strategies of state-dependent feedback control.

\begin{figure}[htbp]
\centering
\subfiguretopcaptrue
\subfigure[]{
\label{Fig.4(a)}
\includegraphics[width=0.46\textwidth]{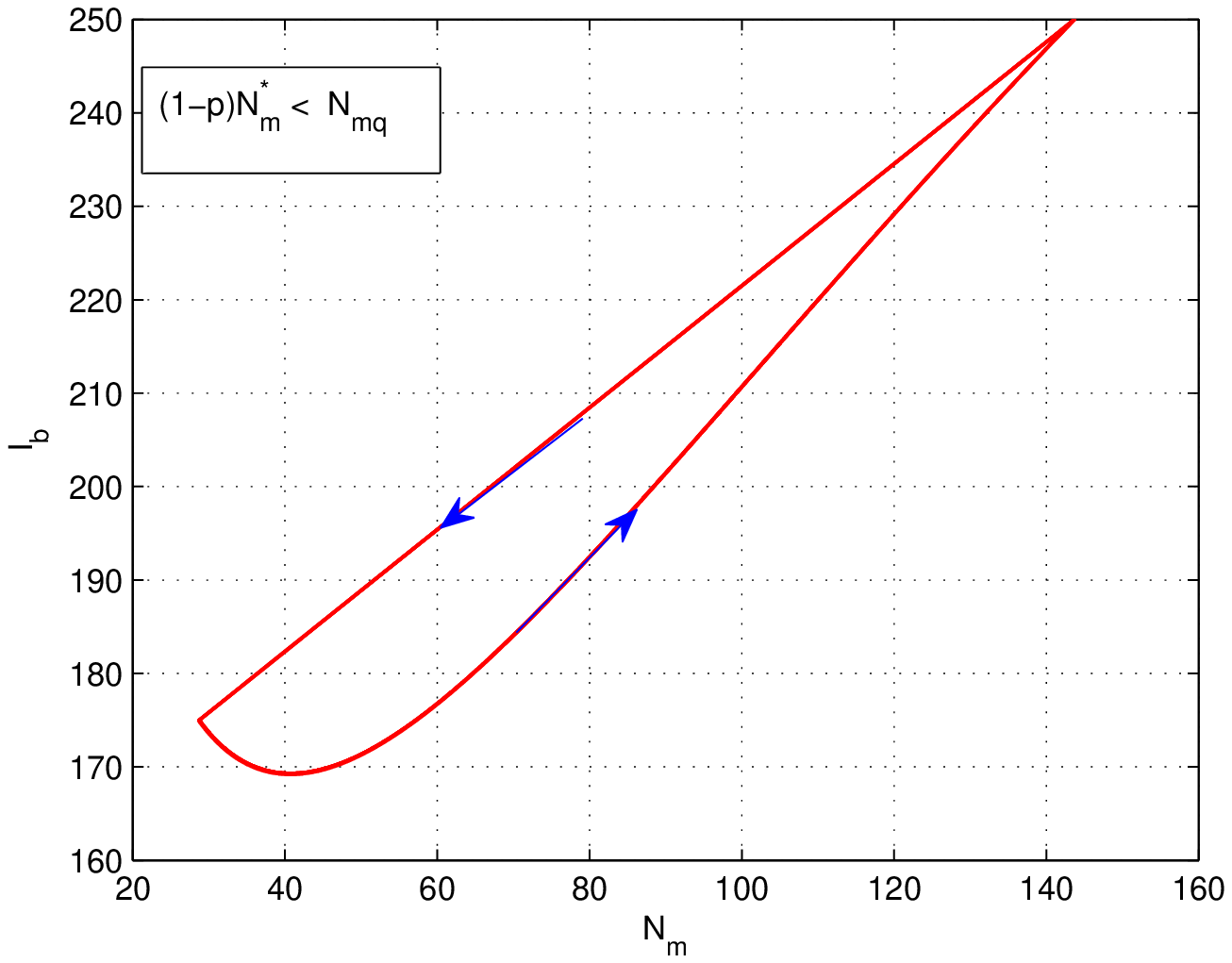}}
\subfigure[]{
\label{Fig.4(b)}
\includegraphics[width=0.46\textwidth]{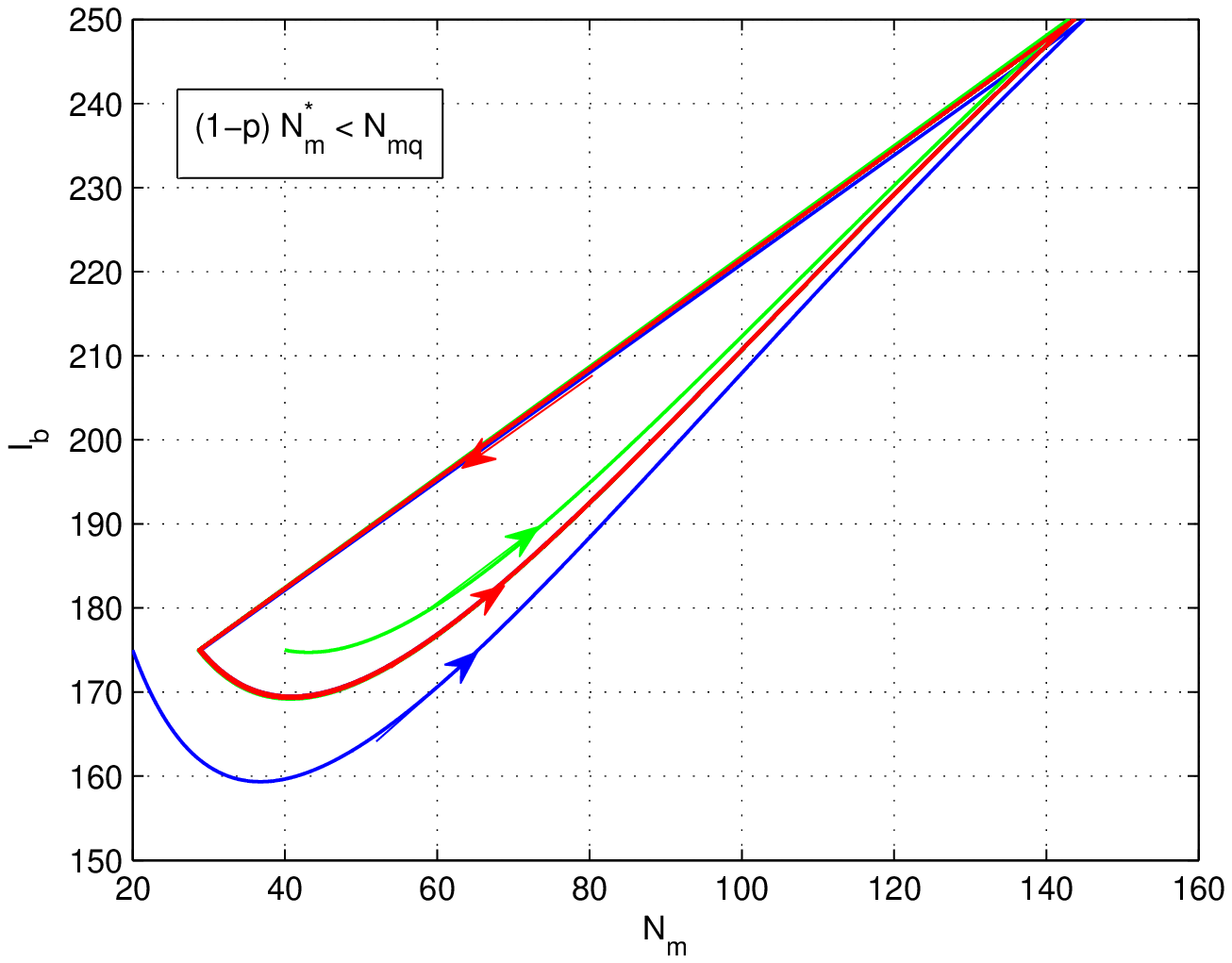}}
\caption{(a)The existence of order-1 periodic solution of model \eqref{System-2}. (b)The orbitally asymptotical stability of order-1 periodic solution of model \eqref{System-2}. The baseline parameter values are as follows:  $\mu_m=0.06$, $\delta_m=0.04$, $K_m=1000$, $c=0.09$, $\beta_{bm}=0.8$, $N_b=400$, $\mu_b=0.01$, $p=0.8$, $q=0.3$, $H_b=250<400$ and the initial value $N_{m0}=29$, $I_{b0}=175$ in the case of $(1-p)M^* < N_{mq}$.}
\label{Fig.4}
\end{figure}

Secondly, the existence and orbitally asymptotical stability of order-1 periodic solution of model \eqref{System-2} is shown in Figure \ref{Fig.4(a)} and \ref{Fig.4(b)} in the case of $(1-p)M^* < N_{mq}$. We know that the numerical simulation result is consistent with the therical reult of Theorem \eqref{Theorem-5}. Further, when the infection of West Nile Virus would get worse, we have to take this strategy of state feedback control and utlize this condition $(1-p)M^* < N_{mq}$ so that the number of mosquitoes should confine in a lower range.

\begin{figure}[htbp]
\centering
\subfiguretopcaptrue
\subfigure[]{
\label{Fig.5(a)}
\includegraphics[width=0.46\textwidth]{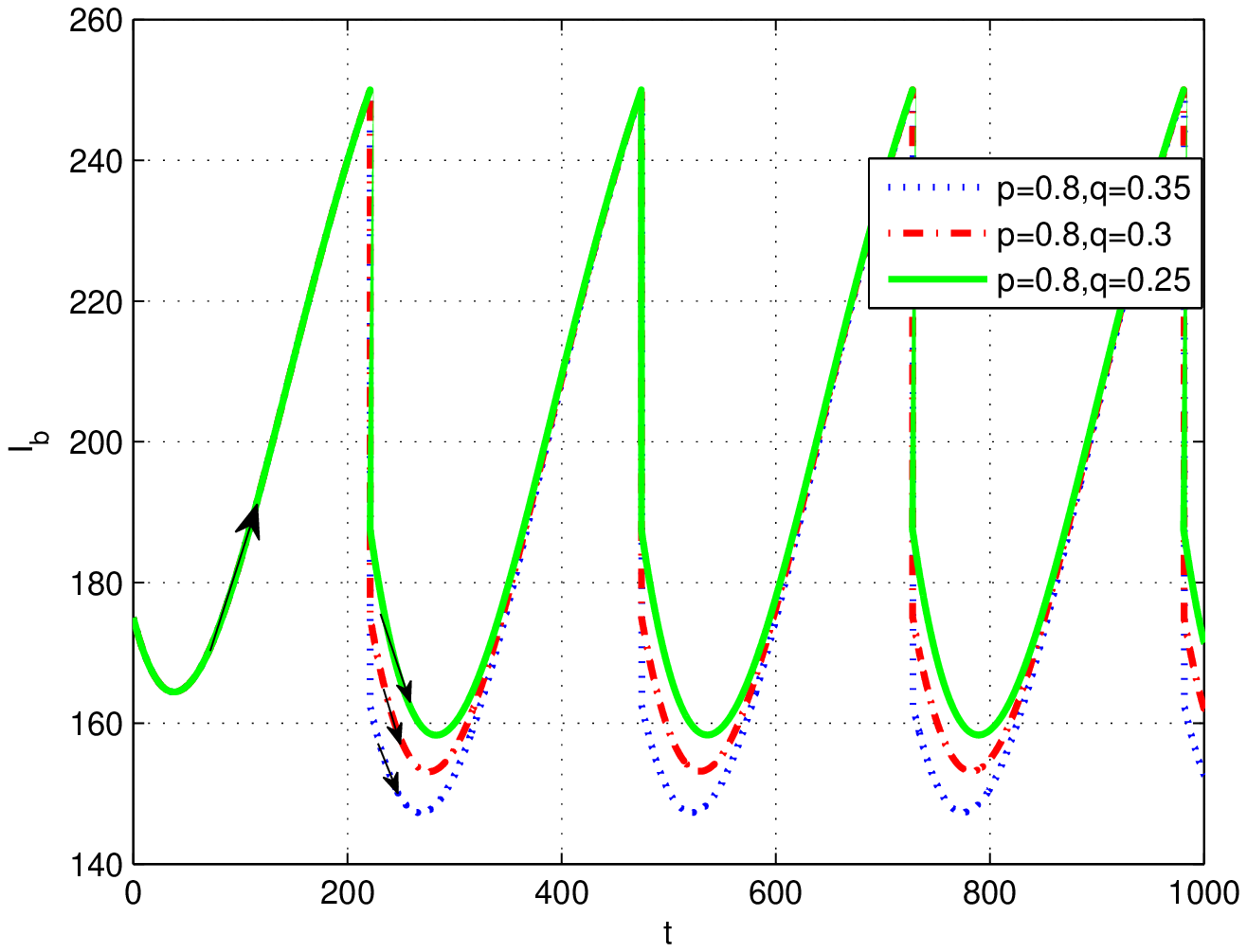}}
\subfigure[]{
\label{Fig.5(b)}
\includegraphics[width=0.46\textwidth]{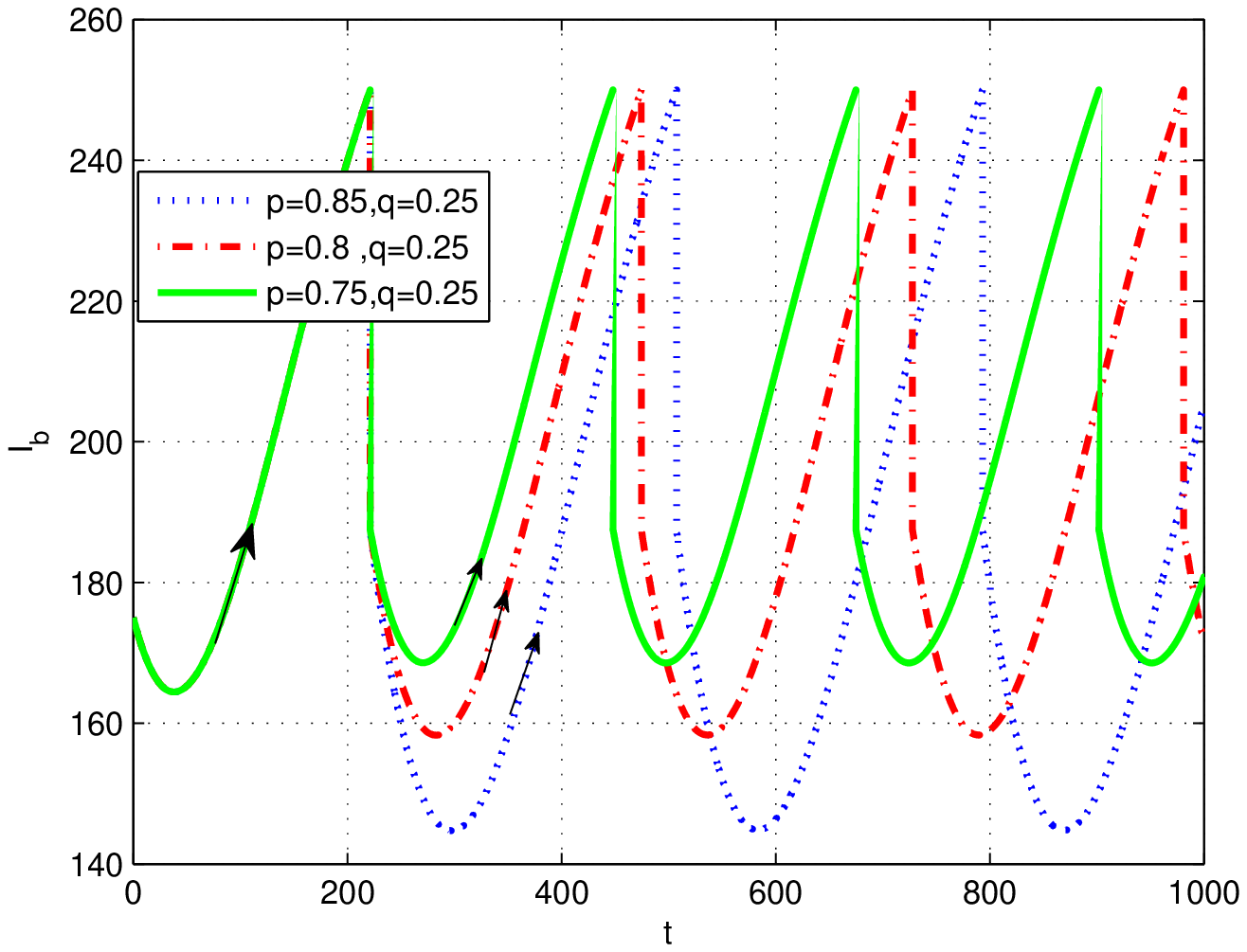}}
\caption{(a)The effects of $q$(the proportion of curing infected birds), where $q=0.35, 0.3, 0.25$ and $p=0.8$. (b)The effects of $p$(the proportion of culling mosquitoes), where $p=0.85, 0.8, 0.75$ and $q=0.25$. The other parameters are $\mu_m=0.06$, $\delta_m=0.05$, $K_m=1000$, $c=0.09$, $\beta_{bm}=0.8$, $N_b=400$, $\mu_b=0.01$, $H_b=250<400$,  and the initial value $N_{m0}=29$, $I_{b0}=175$ in the case of $(1-p)M^* < N_{mq}$.}
\label{Fig.5}
\end{figure}

We find that the number of infected birds $I_b$ dramatically decreases as curing rate $q$ increases while the time interval for  the infected birds reaching threshold value $H_b$ is fixed (shown in Figure \ref{Fig.5(a)}). From Figure \ref{Fig.5(b)}, we draw that the number of infected birds $I_b$ dramatically decreases, when $p$ increased. Further, the change of $p$ lead to the change of periodic. And then, we show that the periodic of infected birds increases as the cofficient $q$ of curing infected birds increased. Consequently, the culling rate of mosquitoes $p$ plays an important role in controlling the interval for the infected birds reaching threshold value $H_b$. The curing rate of birds $q$ is proportionate to the maximum amplitude of the number of infected birds $I_b$.

\begin{figure}[htbp]
\centering
\subfiguretopcaptrue
\subfigure[]{
\label{Fig.6(a)}
\includegraphics[width=0.46\textwidth]{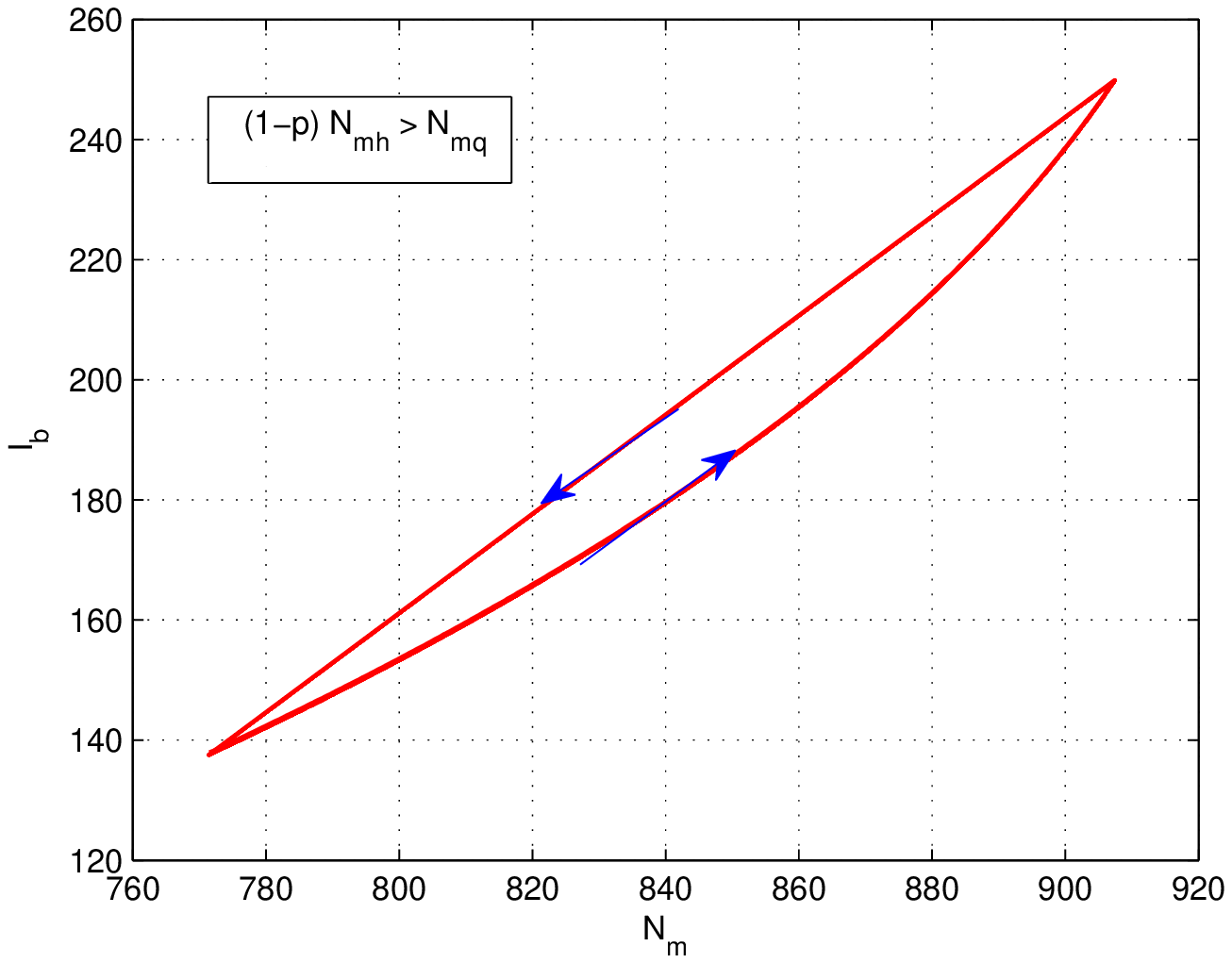}}
\subfigure[]{
\label{Fig.6(b)}
\includegraphics[width=0.46\textwidth]{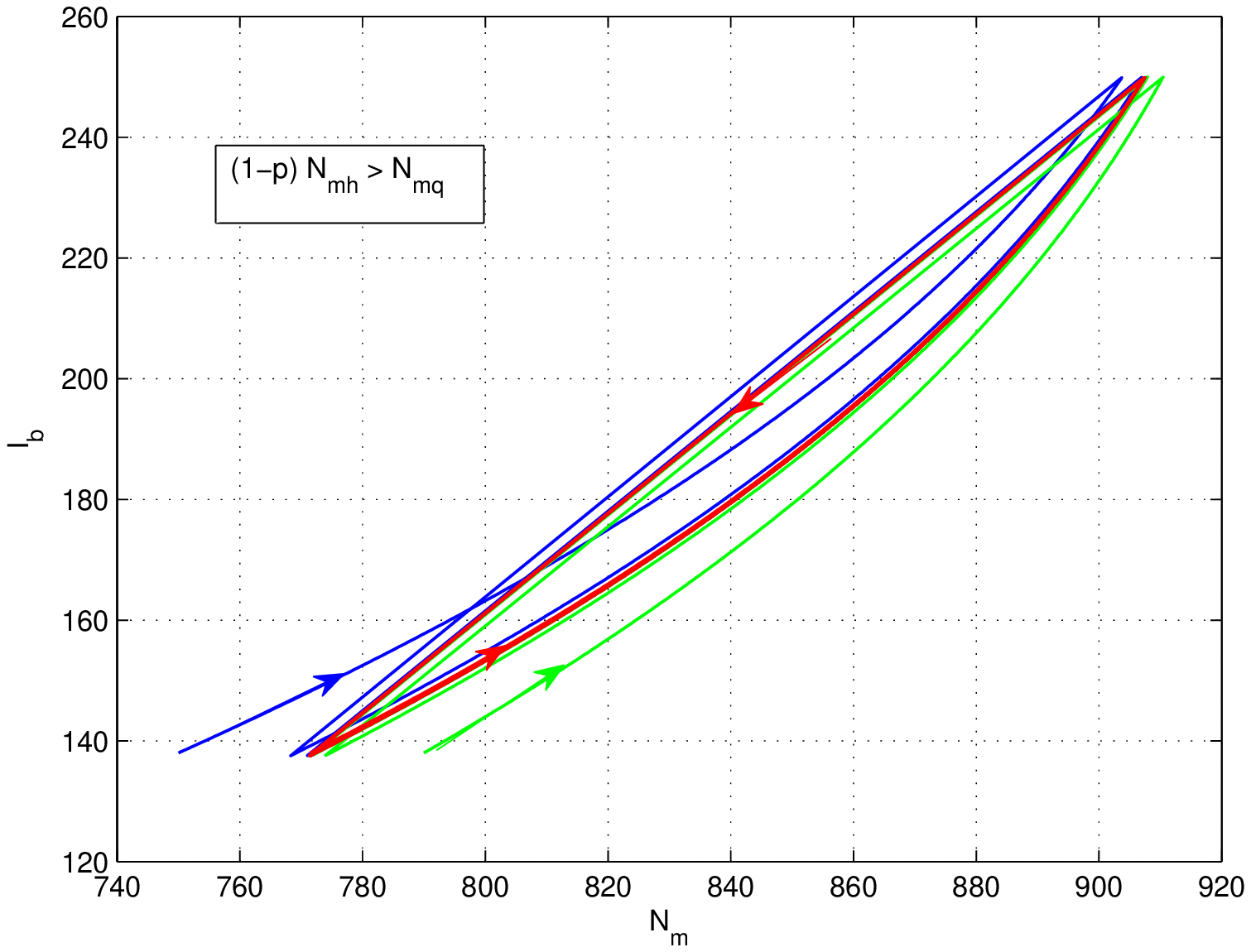}}
\caption{The existence of order-1 periodic solution of model \eqref{System-2}. (b)The orbitally asymptotical stability of order-1 periodic solution of model \eqref{System-2}. The baseline parameter values are as follows:  $\mu_m=0.357$, $\delta_m=0.035$, $K_m=1000$, $c=0.09$, $\beta_{bm}=0.8$, $N_b=400$, $\mu_b=0.01$, $p=0.15$, $q=0.45$, $H_b=250<400$ and the initial value $N_{m0}=771$, $I_{b0}=137$ in the case of $(1-p)N_{mh} > N_{mq}$.}
\label{Fig.6}
\end{figure}

Thirdly, the existence and orbitally asymptotical stability of order-1 periodic solution of model \eqref{System-2} is shown in Figure \ref{Fig.6(a)} and \ref{Fig.6(b)} in the case of $(1-p)N_{mh} > N_{mq}$. We know that the numerical simulation result is consistent with the therical reult of Theorem \eqref{Theorem-6}. Further, when the infection of West Nile Virus is not serious, we should take this strategies of state feedback control and satisfy this condition $(1-p)N_{mh} > N_{mq}$ so that the number of infected birds and mosquitoes should confine in a lower periodic range.

\begin{figure}[htbp]
\centering
\subfiguretopcaptrue
\subfigure[]{
\label{Fig.7(a)}
\includegraphics[width=0.46\textwidth]{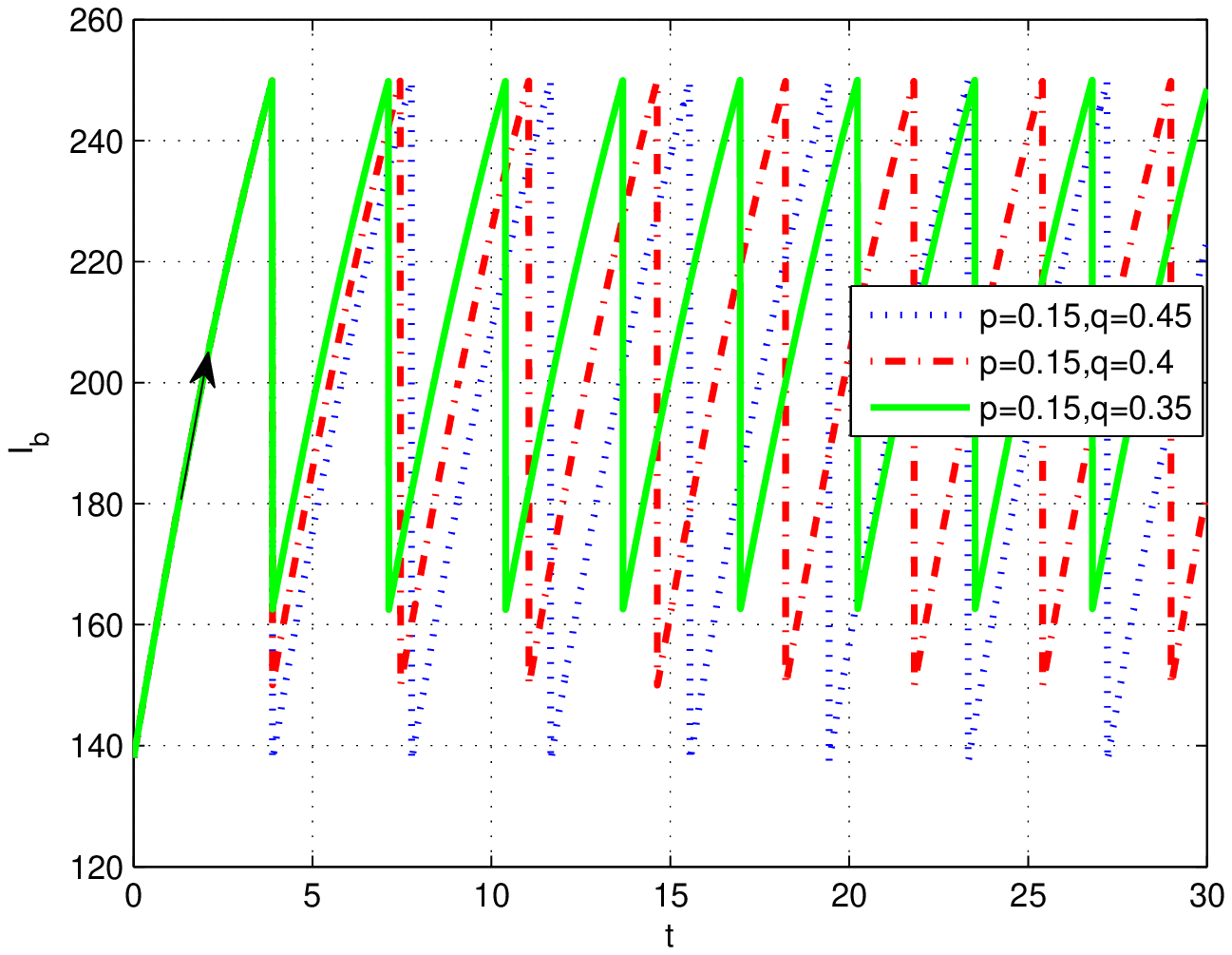}}
\subfigure[]{
\label{Fig.7(b)}
\includegraphics[width=0.46\textwidth]{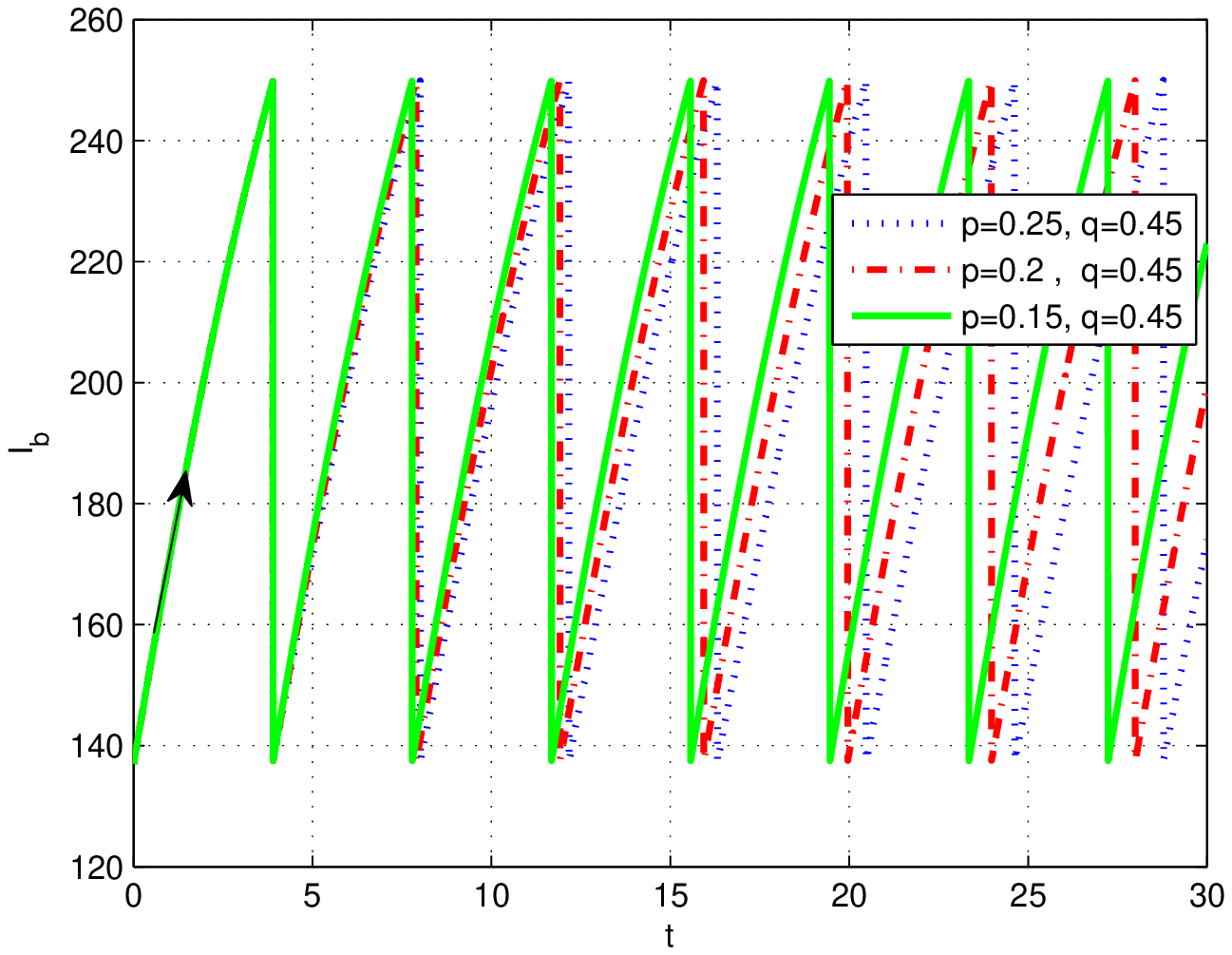}}
\caption{(a)The effects of $q$(the proportion of curing infected birds), where $q=0.45, 0.4, 0.35$ and $p=0.15$. (b)The effects of $p$(the proportion of culling mosquitoes), where $p=0.25, 0.2, 0.15$ and $q=0.45$. The other parameters are $\mu_m=0.537$, $\delta_m=0.035$, $K_m=1000$, $c=0.09$, $\beta_{bm}=0.8$, $N_b=400$, $\mu_b=0.01$, $H_b=250<400$,  and the initial value $N_{m0}=771$, $I_{b0}=137$ in the case of $(1-p)N_{mh} > N_{mq}$.}
\label{Fig.7}
\end{figure}

We find that the maximum amplitude of the number of infected birds $I_b$ dramatically decreased and the periodic of infected birds reach the threshold value $H_b=250$ obviously prolonged, when $q$ increased in Figure \ref{Fig.7(a)}. From Figure \ref{Fig.7(b)}, we draw that the maximum amplitude of the number of infected birds $I_b$ did not change, when $p$ increased. Further, we show that the periodic of infected birds extanded as the cofficient($q$) of curing infected birds increased. Therefore, the culling of mosquitoes $p$ and $q$ play an important role in controling the length of time in infected birds. Particularly, the cofficient($q$) of curing infected birds is more effective than the culling of mosquitoes $p$ in controlling infected birds with the $(1-p)N_{mh} > N_{mq}$.

\begin{figure}[htbp]
\centering
\subfiguretopcaptrue
\subfigure[]{
\label{Fig.8(a)}
\includegraphics[width=0.45\textwidth]{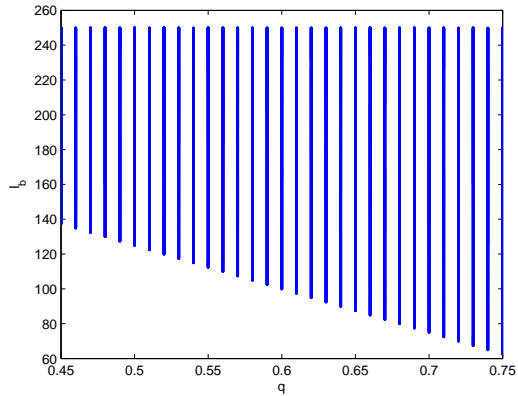}}
\caption{ (a) The bifurcation diagram of model \eqref{System-2} with respect to the parameter $q\in(0.45,\;0.75)$. The other parameters are $\mu_m=0.537$, $\delta_m=0.035$, $K_m=1000$, $c=0.09$, $\beta_{bm}=0.8$, $N_b=400$, $\mu_b=0.01$, $p=0.25$ and $H_b=250<400$.}
\label{Fig.8}
\end{figure}
The bifurcation diagram of model \eqref{System-2} with respect to the parameter $q\in(0.45,\;0.75)$ is shown in Figure \ref{Fig.8(a)}. The higher cured proportion of infected birds we choosed, the less infected birds are they.

\section{Concluding remarks}
In this paper, we built a new WNV mathematical model with impulsive state feedback
control. Firstly, we investigated qualitative characteristic of model \eqref{System-2} without impulsive effect, and obtained sufficient condition of globally asymptotically stable of the model \eqref{System-4}. Secondly, we decrive sufficient conditions for the existence and orbital stability of the positive order-1 periodic solution employing Poincar$\acute{e}$ map and the analogue of Poincar\'e criterion. Considering the threshold $H_b$ is how to have an effect on dynamic behavior of model \eqref{System-2}. There are two possible cases which are $(1-p)V_m^* < V_{mq}$ and $(1-p)V_{mh} > V_{mq}$. Existence and stability of periodic solution of order-1 or order-2 was yielded under the conditions $(1-p)V_m^* < V_{mq}$. The case, in which it was impossible that there was a periodic solution of order-$k(k\geq3)$ by differential equation geometry, theory of differential inequalities. Therefore, the culling of mosquitoes $p$ plays an important role in controling the length of time in infected birds. When $(1-p)V_{mh} > V_{mq}$ holds, we attained the existence and stability of periodic solution of order-1. The case, in which it was impossible that there was a periodic solution of order-$k(k\geq2)$ by theory of differential inequalities, the existence and uniqueness of the limit. Therefore, the culling of mosquitoes $p$ and $q$ play an important role in controling the length of time in infected birds. Particularly, the cofficient($q$) of curing infected birds is more effective than the culling of mosquitoes $p$ in controlling infected birds with the $(1-p)N_{mh} > N_{mq}$.

These results have important implications for curbing spread of the West Nile Virus, especially endangered birds.

\end{document}